
\documentclass[
final
leqno
]{siamltex}

\title{Preconditioned Recycling Krylov subspace methods for~self-adjoint problems}
\author{%
Andr\'e Gaul\thanks{Institut f\"ur Mathematik, Technische Universit\"at Berlin, Stra\ss e des 17.\ Juni, D-10623 Berlin, Germany (\texttt{gaul@math.tu-berlin.de}). The work of Andr\'e Gaul was supported by the DFG For\-schungs\-zen\-trum MATHEON.}
\and
Nico Schl\"omer\thanks{Departement Wiskunde en Informatica, Universiteit Antwerpen, Middelheimlaan 1, B-2020 Antwerpen, Belgium (\texttt{nico.schloemer@ua.ac.be}). The work of Nico Schl\"omer was supported by the Research Foundation Flanders (FWO).}}


\usepackage[l2tabu, orthodox]{nag}


\usepackage[tableposition=top]{caption}

\usepackage[font=normalsize]{subcaption}
\usepackage{pgfplots}
\pgfplotsset{compat=newest}



\usepackage{amsmath}
\usepackage{amsfonts}
\usepackage{color}
\usepackage{url}
\usepackage{tabularx}
\usepackage{booktabs}
\usepackage{algorithm} 
\usepackage{algpseudocode} 

\usepackage[normalem]{ulem} 
\usepackage{bm}
\newcommand\operatorinf[1]{\ensuremath{\mathcal{#1}}}
\newcommand\oiA{\operatorinf{A}}
\newcommand\oiB{\operatorinf{B}}
\newcommand\oiI{\operatorinf{I}}
\newcommand\oiL{\operatorinf{L}}
\newcommand\oiM{\operatorinf{M}}
\newcommand\oiP{\operatorinf{P}}

\newcommand\operatorfin[1]{\ensuremath{\bm{#1}}}

\newcommand\ofB{\operatorfin{B}}

\newcommand\ofD{\operatorfin{D}}
\newcommand\ofE{\operatorfin{E}}
\newcommand\ofF{\operatorfin{F}}
\newcommand\ofG{\operatorfin{G}}
\newcommand\ofI{\operatorfin{I}}

\newcommand\ofT{\operatorfin{T}}

\newcommand\ofZ{\operatorfin{Z}}

\newcommand\rev[1]{#1}

\newcommand\vect[1]{\ensuremath{#1}}
\newcommand\vb{\vect{b}}
\newcommand\ve{\vect{e}}
\newcommand\vr{\vect{r}}
\newcommand\vs{\vect{s}}
\newcommand\vu{\vect{u}}
\newcommand\vv{\vect{v}}
\newcommand\vw{\vect{w}}
\newcommand\vx{\vect{x}}
\newcommand\vy{\vect{y}}
\newcommand\vz{\vect{z}}

\newcommand\vecttuple[1]{\ensuremath{#1}}
\newcommand\vtC{\vecttuple{C}}
\newcommand\vtU{\vecttuple{U}}
\newcommand\vtV{\vecttuple{V}}
\newcommand\vtW{\vecttuple{W}}
\newcommand\vtX{\vecttuple{X}}
\newcommand\vtY{\vecttuple{Y}}

\newcommand\vectspace[1]{\ensuremath{#1}}
\newcommand\vsH{\vectspace{H}}
\newcommand\vsS{\vectspace{S}}
\newcommand\vsL{\vectspace{L}}

\newcommand\A{\ensuremath{\bm{A}}}
\newcommand\0{\ensuremath{\bm{0}}}

\newcommand\x{\ensuremath{\bm{x}}}
\newcommand\ee{\ensuremath{\bm{e}}}
\newcommand\w{\ensuremath{\bm{w}}}
\newcommand\dfn{\ensuremath{\mathrel{\mathop:}=}}
\newcommand\igralnl[4]{\ensuremath{\int\nolimits_{#1}^{#2} #3 \, \mathrm{d} #4}}
\newcommand\ii{\ensuremath{\textup{i}}}
\newcommand\conj[1]{\ensuremath{\overline{#1}}}
\newcommand\bn{\ensuremath{\bm{\nabla}}}
\newcommand\n{\ensuremath{\mathbf{n}}}
\newcommand\R{\ensuremath{\mathbb{R}}}
\newcommand\C{\ensuremath{\mathbb{C}}}
\newcommand\N{\ensuremath{\mathbb{N}}}

\newcommand\B{\ensuremath{\bm{B}}}

\newcommand\J{\ensuremath{\mathcal{J}}}
\newcommand\K{\ensuremath{\mathcal{K}}}
\newcommand\Sc{\ensuremath{\mathcal{S}}}
\newcommand\KK{\ensuremath{\mathbb{K}}}
\newcommand\tp{\ensuremath{\mathrm{T}}}
\newcommand\htp{\ensuremath{\mathrm{H}}}
\newcommand\inv{\ensuremath{{-1}}}
\newcommand\idx{\ensuremath{{(k)}}}
\newcommand\til[1]{\ensuremath{\widetilde{#1}}}
\newcommand{\mat}[1]{\begin{bmatrix}#1\end{bmatrix}}

\newcommand\lra{\ensuremath{\longrightarrow}}
\newcommand\LLRA{\ensuremath{\Longleftrightarrow}}

\newcommand{\ip}[3][]{\ensuremath{{\left\langle{#2},{#3}\right\rangle}_{#1}}}
\newcommand{\ipdots}[1][]{\ensuremath{{\left\langle{\cdot},{\cdot}\right\rangle}_{#1}}}
\newcommand{\nrm}[2][]{\ensuremath{\left\|#2\right\|_{#1}}}


\newcommand{\DEF}{\ensuremath\mathrel{\mathop:}=}

\DeclareMathOperator{\spn}{span}
\DeclareMathOperator{\range}{range}
\newlength\figurewidth
\newlength\figureheight

\usepackage{amsthm}
\theoremstyle{plain}
\newtheorem{thm}{Theorem}
\newtheorem{lem}[thm]{Lemma}
\newtheorem{cor}[thm]{Corollary}
\theoremstyle{definition}
\newtheorem{dftn}{Definition}
\theoremstyle{remark}
\newtheorem{rmk}{Remark}
\newtheorem{setup}{Test setup}
\begin{document}

\maketitle

\begin{abstract}
The authors propose a recycling Krylov subspace method for the solution of a
sequence of self-adjoint linear systems.  Such problems appear, for example, in
the Newton process for solving nonlinear equations.  Ritz vectors are
automatically extracted from one MINRES run and then used for self-adjoint
deflation in the next.  The method is designed to work with arbitrary inner
products and arbitrary self-adjoint positive-definite preconditioners whose
inverse can be computed with high accuracy.  Numerical experiments with
nonlinear Schr\"odinger equations indicate a substantial decrease in
computation time when recycling is used.
\end{abstract}

\begin{keywords}
Krylov subspace methods, MINRES, deflation, Ritz vector recycling, nonlinear
Schr\"odinger equations, Ginzburg--Landau equations
\end{keywords}

\begin{AMS}
65F10, 65F08, 35Q55, 35Q56
\end{AMS}

\pagestyle{myheadings}
\thispagestyle{plain}

\section{Introduction}
Sequences of linear algebraic systems frequently occur in the numerical
solution process of various kinds of problems. Most notable are implicit time
stepping schemes and Newton's method for solving nonlinear equation systems. It
is often the case that the operators in subsequent linear systems  have similar
spectral properties or are in fact equal. To exploit this, a common approach is
to factorize the operator once and apply the factorization to the following
steps. However, this strategy typically has high memory requirements and is
thus hardly applicable to problems with many unknowns. Also, it is not
applicable if subsequent linear operators are even only slightly different from
each other.

The authors use the idea of an alternative approach that carries over spectral
information from one linear system to the next by extracting approximations of
eigenvectors and using them in a deflation
framework~\cite{ChaS97,Mor95,Mor05,SaaYEG00}.  For a more detailed overview on
the background of such methods, see~\cite{GauGLN13}.  The method is designed
for Krylov subspace methods in general and is worked out in this paper for the
MINRES method~\cite{PaiS75} in particular.

The idea of recycling spectral information in Krylov
subspace methods is not new.
Notably, Kilmer and de~Sturler~\cite{KilS06} adapted the GCRO
  method~\cite{Stu96} for recycling in the setting of a sequence of linear
  systems. Essentially, this strategy consists of applying the MINRES method to
  a projected linear system where the projection is built from approximate
  eigenvectors for the first matrix of the sequence. Wang, de Sturler, and
  Paulino proposed the RMINRES method~\cite{WanSP07} that also includes the
  extraction of approximate eigenvectors. In contrast to Kilmer and de Sturler,
  the RMINRES method is a modification of the MINRES method that explicitly
  includes these vectors in the search space for the following linear systems
  (\emph{augmentation}).  Similar recycling techniques based on GCRO have also
  been used by Parks et al.~\cite{ParSMJM06}, Mello et al.~\cite{MelSPS10},
  Feng, Benner, and Korvink~\cite{FenBK13} and Soodhalter, Szyld, and
  Xue~\cite{SooSX13}.  A different approach has been proposed by Giraud,
Gratton, and Martin~\cite{GirGM07}, where a preconditioner is updated with
approximate spectral information for use in a GMRES variant.

GCRO-based methods with augmentation of the search space, including
  RMINRES, are mathematically equivalent to the standard GMRES method (or MINRES
  for the symmetric case) applied to a projected linear system~\cite{GauGLN13}.
Krylov subspace methods that are applied to projected
linear systems are often called \emph{deflated} methods. In the literature,
both augmented and deflated methods have been used in a variety of settings; we
refer to Eiermann, Ernst, and Schneider~\cite{EieES00} and the review
article by Simoncini and Szyld~\cite{SimS07} for a comprehensive overview.


In general, Krylov subspace methods are only feasible in combination with a
preconditioner.  In~\cite{WanSP07} only factorized preconditioners of the form
$A\approx CC^T$ can be used such that instead of $Ax=b$ the preconditioned
system $C^{-1}AC^{-T}y=C^{-1}b$ is solved. In this case the system matrix
remains symmetric. While preconditioners like (incomplete) Cholesky
factorizations have this form, other important classes like (algebraic)
multigrid do not.
A major difference of the method presented here from RMINRES
is that it allows for a greater variety of preconditioners.
The only restrictions on the preconditioner $M^{-1}$ are that it has to be
  self-adjoint and positive-definite, and that its inverse has
  to be known; see the discussion at the end of section~\ref{sec:minres:ritz}
  for more details.
  While this excludes the popular class of multigrid preconditioners with a fixed
  number of cycles, full multigrid preconditioners are admissible.
  To the best knowledge of the authors, no such method has been considered
  before.
  \rev{Note that the requirement of a self-adjoint and positive-definite
    preconditioner $M^{-1}$ is common in the context of methods for self-adjoint
    problems (e.g., CG and MINRES) because it allows to change the inner product
    implicitly. With such a preconditioner, the inertia of $A$ is preserved in
    $M^{-1}A$. Deflation is able to remedy the problem to a certain extent,
    e.g., if $A$ only has a few negative eigenvalues.
  }

\rev{Moreover, general inner products are considered which facilitate the
incorporation of arbitrary preconditioners and allow to exploit the
self-adjointness of a problem algorithmically when its natural inner product is
used.}
  This leads to an efficient
  three-term recurrence with the MINRES method instead of a costly full
  orthogonalization in GMRES\@.  One important example of problems that are
  self-adjoint with respect to a non-Euclidean inner product are nonlinear
  Schr\"odinger equations, presented in more detail in section~\ref{sec:nls}.
  General inner products have been considered before; see, e.g.,
    Freund, Golub, and Nachtigal~\cite{FreGN92} or Eiermann, Ernst, and
  Schneider~\cite{EieES00}. Naturally, problems which are Hermitian (with
respect to the Euclidean inner product) also benefit from the results in this
work.

In many of the previous approaches, the underlying Krylov subspace method
itself has to be modified for including deflation; see, e.g., the
  \emph{modified MINRES method} of Wang, de~Sturler, and
  Paulino~\cite[algorithm~1]{WanSP07}. In contrast, the work in the present
  paper separates the deflation methodology from the Krylov subspace method.
Deflation can thus be implemented algorithmically as a wrapper around any
existing MINRES code, e.g., optimized high-performance variants.  The notes on
the implementation in sections~\ref{sec:minres:defl} and~\ref{sec:minres:ritz}
discuss efficient realizations thereof.

For the sake of clarity, restarting -- often used to mitigate memory
  constraints -- is not explicitly discussed in the present paper. However, as
  noted in section~\ref{sec:minres:ritz}, it can be added easily to the algorithm without
affecting the presented advantages of the method. \rev{Note that the method
in~\cite{WanSP07} does not require restarting because it computes Ritz vectors
from a fixed number of Lanczos vectors (\emph{cycling}), cf.\
section~\ref{sec:minres:ritz}. Since the non-restarted method maintains global
optimality over the entire Krylov subspace (in exact arithmetic), it may exhibit
a more favorable convergence behavior than restarted methods.}

In addition to the deflation of computed Ritz vectors, other vectors can be
included that carry explicit information about the problem in question.  For
example, approximations to eigenvectors corresponding to critical eigenvalues
are readily available from analytic considerations. Applications for this are
plentiful, e.g., flow in porous media considered by Tang et al.~\cite{TanNVE09}
and nonlinear Schr\"odinger equations, see section~\ref{sec:nls}.


The deflation framework with the properties presented in this paper are
applied in the numerical solution of nonlinear Schr\"odinger equations.
Nonlinear Schr\"odinger equations and their variations are used to describe a
wide variety of physical systems, most notably in superconductivity, quantum
condensates, nonlinear acoustics~\cite{som1979coupled}, nonlinear
optics~\cite{gedalin1997optical}, and hydrodynamics~\cite{nore1993numerical}.
For the solution of nonlinear Schr\"odinger equations with Newton's method, a
linear system has to be solved with the Jacobian operator for each Newton
update. The Jacobian operator is self-adjoint with respect to a non-Euclidean
inner product and indefinite.  In order to be applicable in practice, the
MINRES method can be combined with an AMG-type preconditioner that is able to
limit the number of MINRES iterations to a feasible extent~\cite{SV:2012:OLS}.
Due to the special structure of the nonlinear Schr\"odinger equation, the
Jacobian operator exhibits one eigenvalue that moves to zero when the Newton
iterate converges to a nontrivial solution and is exactly zero in a solution.
Because this situation only occurs in the last step, no linear system has to be
solved with an exactly singular Jacobian operator but the severely
ill-conditioned Jacobian operators in the final Newton steps lead to
convergence slowdown or stagnation in the MINRES method even when a
preconditioner is applied. For the numerical experiments we consider the
Ginzburg--Landau equation, an important instance of nonlinear Schr\"odinger
equations that models phenomena of certain superconductors. We use the proposed
recycling MINRES method and show how it can help to improve the convergence of
the MINRES method. All enhancements of the deflated MINRES method, i.e.,
arbitrary inner products and preconditioners are required for this application.
As a result, the overall time consumption of Newton's method for the
Ginzburg--Landau equation is reduced by roughly 40\%.

The deflated Krylov subspace methods described in this paper are implemented
  in the Python package \emph{KryPy}~\cite{krypy}; solvers for nonlinear
  Schr\"odinger problems are available from \emph{PyNosh}~\cite{pynosh}. Both
  packages are free and open-source software. All results from this paper can
  be reproduced with the help of these packages.

The paper is organized as follows: section~\ref{sec:minres} gives a brief
overview on the preconditioned MINRES method for an arbitrary nonsingular
linear operator that is self-adjoint with respect to an arbitrary inner
product.  The deflated MINRES method is described in
subsection~\ref{sec:minres:defl} while subsection~\ref{sec:minres:ritz}
presents the computation of Ritz vectors and explains their use in the overall
algorithm for the solution of a sequence of self-adjoint linear systems. In
section~\ref{sec:nls} this algorithm is applied to the Ginzburg--Landau
equation. Subsections~\ref{sec:nls:principal} and~\ref{sec:nls:gl} deal with
the numerical treatment of nonlinear Schr\"odinger equations in general and the
Ginzburg--Landau equation in particular. In subsection~\ref{sec:nls:exp}
numerical results for typical two- and three-dimensional setups are presented.




\section{MINRES}
\label{sec:minres}

\subsection{Preconditioned MINRES with arbitrary inner product}

This section presents well-known properties of the preconditioned MINRES
method.  As opposed to ordinary textbook presentations this section
incorporates a general Hilbert space.  For $\KK\in\{\R,\C\}$ let $\vsH$ be a
$\KK$-Hilbert space with inner product $\ipdots[\vsH]$ and induced norm
$\nrm[\vsH]{\cdot}$. Throughout this paper the inner product $\ipdots[\vsH]$ is
linear in the first and anti-linear in the second argument and we define
$\vsL(\vsH)\DEF\{\oiL:\vsH\lra\vsH~|~\oiL~\text{is linear and bounded}\}$.
The vector space of $k$-by-$l$ matrices is denoted by $\KK^{k,l}$. We
wish to obtain $\vx\in\vsH$ from
\begin{equation}\label{eq:LS}
    \oiA \vx = \vb
\end{equation}
where $\oiA \in\vsL(\vsH)$ is $\ipdots[\vsH]$-self-adjoint and
invertible and $\vb\in\vsH$. The self-adjointness implies that
the spectrum $\sigma(\oiA)$ is real. However, we do not assume that $\oiA$ is
definite.

If an initial guess $\vx_0\in\vsH$ is given, we can approximate $\vx$ by
iterates
\begin{equation}\label{eq:iterates}
    \vx_n = \vx_0 + \vy_n \quad \text{with} \quad \vy_n\in \K_n(\oiA,\vr_0)
\end{equation}
where $\vr_0= \vb-\oiA\vx_0$ is the initial residual and
$\K_n(\oiA,\vr_0)=\spn\{\vr_0,\oiA\vr_0,\ldots,\oiA^{n-1}\vr_0\}$ is the $n$th
Krylov subspace generated with $\oiA$ and $\vr_0$. We concentrate on minimal
residual methods here, i.e., methods that construct iterates of the
form~\eqref{eq:iterates} such that the residual $\vr_n\DEF \vb-\oiA\vx_n$ has
minimal
$\nrm[\vsH]{\cdot}$-norm, that is
\begin{align}\label{eq:minires}
    \nrm[\vsH]{\vr_n}
        &= \nrm[\vsH]{\vb-\oiA\vx_n}
        = \nrm[\vsH]{\vb - \oiA(\vx_0+\vy_n)}
        = \nrm[\vsH]{\vr_0 - \oiA\vy_n} \notag\\
        &= \min_{\vy\in\K_n(\oiA,\vr_0)}{\nrm[\vsH]{\vr_0 - \oiA\vy}}
        = \min_{p\in\Pi_n^0} \nrm[\vsH]{p(\oiA)\vr_0}
\end{align}
where $\Pi_n^0$ is the set of polynomials of degree at most $n$ with $p(0)=1$.
For a general invertible linear operator $\oiA$, the minimization problem in
\eqref{eq:minires} can be solved by the GMRES method~\cite{SaaS86} which is
mathematically equivalent to the MINRES method~\cite{PaiS75} if $\oiA$ is
self-adjoint~\cite[section~2.5.5]{LieS13}.


To facilitate subsequent definitions and statements for general Hilbert spaces,
we use a block notation for inner products that generalizes the common block
notation for matrices:
\begin{dftn}
    For $k,l\in\N$ and two tuples of vectors
    $\vtX=[\vx_1,\ldots,\vx_k]\in\vsH^k$ and $\vtY=[\vy_1,\ldots,\vy_l]\in\vsH^l$
    the product $\ipdots[\vsH]:\vsH^k\times\vsH^l \lra \KK^{k,l}$ is
    defined by
    \[
    \ip[\vsH]{\vtX}{\vtY}\DEF \left[\ip[\vsH]{\vx_i}{\vy_j}\right]_{\substack{i=1,\ldots,k\\j=1,\ldots,l}}.
    \]
\end{dftn}
For the Euclidean inner product and two matrices $\vtX\in\C^{N,k}$ and
$\vtY\in\C^{N,l}$, the product takes the form
$\ip[2]{\vtX}{\vtY}=\vtX^\htp\vtY$.

A block $\vtX\in\vsH^k$ can be right-multiplied with a matrix just as in the
plain matrix case:
\begin{dftn}
    For $\vtX\in\vsH^k$ and
    $\ofZ=\left[z_{ij}\right]_{\substack{i=1,\ldots,k\\j=1,\ldots,l}}\in\KK^{k,l}$,
    right multiplication is defined by
    \[
    \vtX\ofZ \DEF \left[\sum_{i=1}^k z_{ij}\vx_i\right]_{j=1,\ldots,l} \in\vsH^l.
    \]
\end{dftn}

Because the MINRES method and the underlying Lanczos algorithm are often stated
for Hermitian matrices only (i.e., for the Euclidean inner product), we
recall very briefly some properties of the Lanczos algorithm for a linear
operator that is self-adjoint with respect to an arbitrary inner product
$\ipdots[\vsH]$~\cite{EieE01}. If the Lanczos algorithm with inner product $\ipdots[\vsH]$
applied to $\oiA$ and the initial vector $\vv_1=\vr_0/\nrm[\vsH]{\vr_0}$
has completed the $n$th iteration, the Lanczos relation
\begin{align}\label{eq:lanzcos}
    \oiA\vtV_n = \vtV_{n+1} \underline{\ofT}_n
\end{align}
holds, where the elements of $\vtV_{n+1}=[\vv_1,\ldots,\vv_{n+1}]\in\vsH^{n+1}$ form a
$\ipdots[\vsH]$-orthonormal basis of $\K_{n+1}(\oiA,\vr_0)$, i.e.,
$\spn\{\vv_1,\ldots,\vv_{n+1}\}=\K_{n+1}(\oiA,\vr_0)$ and
$\ip[\vsH]{\vtV_{n+1}}{\vtV_{n+1}}=\ofI_{n+1}$. Note that the orthonormality
implies $\nrm[\vsH]{\vtV_{n+1}\vz} = \nrm[2]{\vz}$ for all $\vz\in\KK^{n+1}$. The matrix
$\underline{\ofT}_n\in\R^{n+1,n}$ is real-valued (even if $\KK=\C$), symmetric, and
tridiagonal with
\[
    \underline{\ofT}_k=[\ip[\vsH]{\oiA\vv_i}{\vv_j}]_{\substack{i=1,\ldots,n+1\\j=1,\ldots,n}}.
\]
The $n$th approximation of the solution of the linear system~\eqref{eq:LS}
generated with the MINRES method and the corresponding residual norm,
cf.~\eqref{eq:iterates} and \eqref{eq:minires}, can then be expressed as
\begin{align*}
    \vx_n &= \vx_0 + \vtV_n \vz_n \quad \text{with} \quad \vz_n\in \KK^n \quad
    \text{and}\\
    \nrm[\vsH]{\vr_n}
        &= \nrm[\vsH]{\vr_0 - \oiA\vtV_n\vz_n}
        = \nrm[\vsH]{\vtV_{n+1}(\nrm[\vsH]{\vr_0} \ve_1 - \underline{\ofT}_n\vz_n)}
        = \nrm[2]{\nrm[\vsH]{\vr_0} \ve_1 - \underline{\ofT}_n\vz_n}.
\end{align*}
By recursively computing a QR decomposition of $\underline{\ofT}_n$, the
minimization problem in \eqref{eq:minires} can be solved without storing the
entire matrix $\underline{\ofT}_n$ and, more importantly, the full Lanczos basis
$\vtV_n$.

Let $N\DEF\dim\vsH<\infty$ and let the elements of $\vtW\in\vsH^N$ form a
$\ipdots[\vsH]$-orthonormal basis of $\vsH$ consisting of eigenvectors of
$\oiA$. Then $\oiA\vtW=\vtW\ofD$
for the diagonal matrix $\ofD=\diag(\lambda_1,\ldots,\lambda_N)$ with $\oiA$'s eigenvalues
$\lambda_1,\ldots,\lambda_N\in\R$ on the diagonal. Let $\vr_0^\vtW\in\KK^N$ be
the representation of $\vr_0$ in the basis $\vtW$, i.e., $\vr_0=\vtW\vr_0^\vtW$.
According to \eqref{eq:minires}, the residual norm of the $n$th approximation
obtained with MINRES can be expressed as
\begin{align}\label{eq:minresbound}
    \nrm[\vsH]{\vr_n}
        &= \min_{{p\in\Pi_n^0}} \nrm[\vsH]{p(\oiA)\vtW\vr_0^\vtW}
        = \min_{{p\in\Pi_n^0}} \nrm[\vsH]{\vtW p(\ofD) \vr_0^\vtW}
        = \min_{{p\in\Pi_n^0}} \nrm[2]{p(\ofD) \vr_0^\vtW} \notag \\
        &\leq \nrm[2]{\vr_0^\vtW} \min_{{p\in\Pi_n^0}} \nrm[2]{p(\ofD)}.
\end{align}
From $\nrm[2]{\vr_0^\vtW}=\nrm[\vsH]{\vtW\vr_0^\vtW}=\nrm[\vsH]{\vr_0}$ and
$\nrm[2]{p(\ofD)} =  \max_{{i\in \{1,\ldots,N\}}} |p(\lambda_i)|$, we
obtain the well-known MINRES worst-case bound for the relative residual
norm~\cite{Gre97,LieT04}
\begin{equation}\label{eq:minreswcbound}
    \frac{\nrm[\vsH]{\vr_n}}{\nrm[\vsH]{\vr_0}}
        \leq \min_{{p\in\Pi_n^0}} \max_{i\in\{1,\ldots,N\}} {|p(\lambda_i)|}.
\end{equation}

This can be estimated even further upon
letting the eigenvalues of $\oiA$ be sorted such that
$\lambda_1\leq\ldots\leq\lambda_s<0<\lambda_{s+1}\leq\ldots\leq\lambda_N$
for a $s\in\N_0$.
By replacing the discrete set of eigenvalues in \eqref{eq:minreswcbound}  by the union of the two intervals
$I^-\DEF[\lambda_1,\lambda_s]$ and $I^+\DEF[\lambda_{s+1},\lambda_N]$,
one gets
\begin{align}\label{eq:minresconv}
    \frac{\nrm[\vsH]{\vr_n}}{\nrm[\vsH]{\vr_0}}
    &\leq \min_{{p\in\Pi_n^0}} \max_{\lambda\in\sigma(\oiA)}|p(\lambda)|
    \leq \min_{{p\in\Pi_n^0}} \max_{\lambda\in I^-\cup I^+}|p(\lambda)| \notag \\
    &\leq 2 \left(
        \frac{\sqrt{|\lambda_1\lambda_N|} - \sqrt{|\lambda_s\lambda_{s+1}|}}
        {\sqrt{|\lambda_1\lambda_N|} + \sqrt{|\lambda_s\lambda_{s+1}|} }
        \right)^{[n/2]},
\end{align}
where $[n/2]$ is the integer part of $n/2$, cf.~\cite{Gre97,LieT04}. Note that
this estimate does not take into account the actual distribution of the
eigenvalues in the intervals $I^-$ and $I^+$. In practice a better
convergence behavior than the one suggested by the estimate above can often be
observed.

In most applications, the MINRES method is only feasible when it is applied with
a preconditioner. Consider the preconditioned system
\begin{equation}\label{eq:precsys}
    \oiM^\inv\oiA\vx = \oiM^\inv\vb
\end{equation}
where $\oiM\in\vsL(\vsH)$ is a $\ipdots[\vsH]$-self-adjoint,
invertible, and positive-definite linear operator. Note that $\oiM^\inv\oiA$ is
not $\ipdots[\vsH]$-self-adjoint but self-adjoint with respect to
the inner product $\ipdots[\oiM]$ defined by $\ip[\oiM]{\vx}{\vy}\DEF
\ip[\vsH]{\oiM\vx}{\vy}= \ip[\vsH]{\vx}{\oiM\vy}$. The MINRES method is then
applied to \eqref{eq:precsys} with the inner product $\ipdots[\oiM]$ and thus
minimizes $\nrm[\oiM]{\oiM^\inv(\vb - \oiA\vx)}$. From an algorithmic
point of view it is worthwhile to note that only the application of $\oiM^\inv$
is needed and the application of $\oiM$ for the inner products can be carried
out implicitly; cf.~\cite[chapter 6]{ElmSW05}. Analogously to
\eqref{eq:minresconv} the convergence bound for the residuals $\til{\vr}_n$
produced by the preconditioned MINRES method is
\[
  \frac{\nrm[\oiM]{\til{\vr}_n}}{\nrm[\oiM]{\oiM^\inv\vr_0}}
  \leq \min_{{p\in\Pi_n^0}} \max_{\mu\in\sigma(\oiM^\inv\oiA)}|p(\mu)|.
\]
Thus the goal of preconditioning is to achieve a more favorable spectrum of
$\oiM^\inv\oiA$ with an appropriate $\oiM^\inv$.

\subsection{Deflated MINRES}
\label{sec:minres:defl}

In many applications even with the aid of a preconditioner the
convergence of MINRES is hampered -- often due to the presence of one or a few
eigenvalues close to zero that are isolated from the remaining spectrum. This
case has recently been studied by Simoncini and Szyld~\cite{SimS13}. Their
analysis and numerical experiments show that an isolated simple eigenvalue can
cause stagnation of the residual norm until a harmonic Ritz value approximates
the outlying eigenvalue well.

Two strategies are well-known in the literature to circumvent the stagnation or
slowdown in the convergence of preconditioned Krylov subspace methods described
above: \emph{augmentation} and \emph{deflation}. In augmented methods the
Krylov subspace is enlarged by a suitable subspace that contains useful
information about the problem. In deflation techniques the operator is modified
with a suitably chosen projection in order to ``eliminate'' components that
hamper convergence; e.g., eigenvalues close to the origin.  For an extensive
overview of these techniques we refer to Eiermann, Ernst, and
Schneider~\cite{EieES00} and the survey article by Simoncini and
Szyld~\cite{SimS07}. Both techniques are closely intertwined and even turn out
to be equivalent in some cases~\cite{GauGLN13}. Here, we concentrate on
deflated methods and first give a brief description of the recycling MINRES
(RMINRES) method introduced by Wang, de Sturler, and Paulino~\cite{WanSP07}
before presenting a slightly different approach.

The RMINRES method by Wang, de Sturler, and Paulino~\cite{WanSP07} is
mathematically equivalent~\cite{GauGLN13} to the application of the MINRES
method to the ``deflated'' equation
\begin{equation}\label{eq:rminressys}
    \oiP_1 \oiA \til{\vx} = \oiP_1 \vb
\end{equation}
where for a given $d$-tuple $\vtU\in\vsH^d$ of linearly independent vectors
(which constitute a basis of the recycling space) and $\vtC\DEF\oiA\vtU$, the
linear operator $\oiP_1\in\vsL(\vsH)$ is defined by $\oiP_1\vx\DEF
\vx - \vtC\ip[\vsH]{\vtC}{\vtC}^\inv\ip[\vsH]{\vtC}{\vx}$.
Note that, although $\oiP_1$ is a $\ipdots[\vsH]$-self-adjoint projection
(and thus an orthogonal projection),
$\oiP_1\oiA$ in general is not. However, as outlined in~\cite[section
4]{WanSP07} an orthonormal basis of the Krylov subspace can still be generated
with MINRES' short recurrences and the operator $\oiP_1\oiA$ because
$\K_n(\oiP_1\oiA,\oiP_1 \vr_0)=\K_n(\oiP_1\oiA\oiP_1^*,\oiP_1\vr_0)$. Solutions of
equation~\eqref{eq:rminressys} are not unique for $d>0$ and thus $\vx$ was
replaced by $\til{\vx}$. To obtain an approximation $\vx_n$ of the original
solution $\vx$ from the approximation $\til{\vx}_n$ generated with MINRES
applied to \eqref{eq:rminressys}, an additional correction has to be applied:
\[
    \vx_n = \til{\oiP}_1\til{\vx}_n + \vtU\ip[\vsH]{\vtC}{\vtC}^\inv\ip[\vsH]{\vtC}{\vb},
\]
where $\til{\oiP}_1\in\vsL(\vsH)$ is defined by
$\til{\oiP}_1\vx\DEF \vx -
\vtU\ip[\vsH]{\vtC}{\vtC}^\inv\ip[\vsH]{\vtC}{\oiA\vx}$.

\medskip
Let us now turn to a slightly different deflation technique for MINRES which we
formulate with preconditioning directly. We will use a projection which has
been developed in the context of the CG method for Hermitian and
positive-definite operators~\cite{Dos88,Nic87,TanNVE09}.
Under a mild assumption, this projection is also well-defined in the
  indefinite case. In contrast to the orthogonal projection $\oiP_1$ used in
  RMINRES, it is not self-adjoint but instead renders the projected operator
  self-adjoint.  This is a natural fit for an integration with the MINRES
method.

Our goal is to use approximations to eigenvectors corresponding to eigenvalues
that hamper convergence in order to modify the operator with a projection.
Consider the preconditioned equation~\eqref{eq:precsys} and assume for a moment
that the elements of $\vtU=[\vu_1,\ldots,\vu_d]\in\vsH^d$ form a
$\ipdots[\oiM]$-orthonormal basis consisting of eigenvectors of $\oiM^\inv\oiA$,
i.e., $\oiM^\inv\oiA\vtU = \vtU\ofD$ with a diagonal matrix
$\ofD=\diag(\lambda_1,\ldots,\lambda_d)\in\R^{d,d}$. Then
$\ip[\oiM]{\vtU}{\oiM^\inv\oiA\vtU}=\ip[\oiM]{\vtU}{\vtU}\ofD=\ofD$ is
nonsingular because we assumed that $\oiA$ is invertible. This motivates the
following definition:

\begin{dftn}\label{def:proj}
    Let $\oiM,\oiA\in\vsL(\vsH)$ be invertible and $\ipdots[\vsH]$-self-adjoint
    operators and let $\oiM$ be positive-definite. Let $\vtU\in\vsH^d$ be such that
    $\ip[\oiM]{\vtU}{\oiM^\inv\oiA\vtU}=\ip[\vsH]{\vtU}{\oiA\vtU}$ is nonsingular. We
    define the projections $\oiP_\oiM,\oiP\in\vsL(\vsH)$
    by
    \begin{equation}\label{eq:projections}
    \begin{split}
        \oiP_\oiM \vx &\DEF \vx - \oiM^\inv\oiA\vtU
            \ip[\oiM]{\vtU}{\oiM^\inv\oiA\vtU}^\inv\ip[\oiM]{\vtU}{\vx}\\
            \text{and}\quad\oiP \vx &\DEF \vx - \oiA\vtU\ip[\vsH]{\vtU}{\oiA\vtU}^\inv\ip[\vsH]{\vtU}{\vx}.
    \end{split}
    \end{equation}
\end{dftn}
The projection $\oiP_\oiM$ is the projection onto
  $\range(\vtU)^{\perp_\oiM}$
along $\range(\oiM^\inv\oiA\vtU)$ whereas $\oiP$ is the projection onto
$\range(\vtU)^{\perp_\vsH}$ along $\range(\oiA\vtU)$.

The assumption in definition~\ref{def:proj} that
$\ip[\oiM]{\vtU}{\oiM^\inv\oiA\vtU}$ is nonsingular holds if and only if
$\range(\oiM^\inv\oiA\vtU)\cap\range(\vtU)^{\perp_\oiM}=\{0\}$ or equivalently
if $\range(\oiA\vtU)\cap\range(\vtU)^{\perp_\vsH}=\{0\}$. As stated above, this
condition is fulfilled if $\vtU$ contains a basis of eigenvectors of
$\oiM^\inv\oiA$ and also holds for good-enough approximations thereof;
see, e.g., the monograph of Stewart and Sun~\cite{SteS90} for a thorough
analysis of perturbations of invariant subspaces.
Applying the projection $\oiP_\oiM$ to the preconditioned
equation~\eqref{eq:precsys} yields the deflated equation
\begin{equation}\label{eq:deflsys}
    \oiP_\oiM \oiM^\inv \oiA\til{\vx} = \oiP_\oiM \oiM^\inv \vb.
\end{equation}

The following lemma states some important properties of the operator $\oiP_\oiM
\oiM^\inv \oiA$.
\begin{lem}\label{lem:deflsysprop}
    Let the assumptions in definition~\ref{def:proj} hold. Then
    \begin{enumerate}
        \item \label{lem:deflsysprop:commute}
            $\oiP_\oiM \oiM^\inv = \oiM^\inv \oiP$.
        \item $\oiP\oiA=\oiA\oiP^*$ where $\oiP^*$ is the adjoint operator of
            $\oiP$ with respect to $\ipdots[\vsH]$, defined by
            $\oiP^*\vx=\vx - \vtU\ip[\vsH]{\vtU}{\oiA\vtU}^\inv\ip[\vsH]{\oiA\vtU}{\vx}$.
        \item \label{lem:deflsysprop:selfadj}
            $\oiP_\oiM \oiM^\inv \oiA
            = \oiM^\inv \oiP \oiA
            = \oiM^\inv \oiA \oiP^*$ is self-adjoint with respect to $\ipdots[\oiM]$.
        \item For each initial guess $\til{\vx}_0\in\vsH$, the MINRES method with inner
            product $\ipdots[\oiM]$ applied to equation~\eqref{eq:deflsys} is well
            defined at each iteration until it terminates with a solution of the
            system.
        \item \label{lem:deflsysprop:res} If $\til{\vx}_n$ is the $n$th
            approximation and $\oiP_\oiM\oiM^\inv\vb -
            \oiP_\oiM\oiM^\inv\oiA\til{\vx}_n$ the corresponding residual
            generated by the MINRES method with inner product $\ipdots[\oiM]$
            applied to~\eqref{eq:deflsys} with initial guess $\til{\vx}_0\in\vsH$, then
            the corrected approximation
            \begin{equation}\label{eq:deflsyscorr}
                \vx_n\DEF\oiP^*\til{\vx}_n +
                \vtU\ip[\vsH]{\vtU}{\oiA\vtU}^\inv\ip[\vsH]{\vtU}{\vb}
            \end{equation}
            fulfills
            \begin{equation}\label{eq:deflsysres}
                \oiM^\inv\vb - \oiM^\inv\oiA\vx_n = \oiP_\oiM\oiM^\inv\vb -
            \oiP_\oiM\oiM^\inv\oiA\til{\vx}_n.
            \end{equation}
            (Note that \eqref{eq:deflsysres} also holds for $n=0$.)
    \end{enumerate}
\end{lem}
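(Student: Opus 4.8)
The plan is to prove the five items essentially in the order stated, since each one feeds the next. All of them are direct verifications from the explicit formulas in Definition~\ref{def:proj} together with the self-adjointness of $\oiM$ and $\oiA$ with respect to $\ipdots[\vsH]$.

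\emph{Item 1.} I would start from the definition $\oiP_\oiM\vx = \vx - \oiM^\inv\oiA\vtU\,\ip[\oiM]{\vtU}{\oiM^\inv\oiA\vtU}^\inv\ip[\oiM]{\vtU}{\vx}$ and rewrite the two inner-product factors in terms of $\ipdots[\vsH]$. Using $\ip[\oiM]{\vtU}{\vx} = \ip[\vsH]{\oiM\vtU}{\vx}$ and $\ip[\oiM]{\vtU}{\oiM^\inv\oiA\vtU} = \ip[\vsH]{\vtU}{\oiA\vtU}$, one gets $\oiP_\oiM\vx = \vx - \oiM^\inv\oiA\vtU\,\ip[\vsH]{\vtU}{\oiA\vtU}^\inv\ip[\vsH]{\vtU}{\oiM\vx}$. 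Now apply $\oiM^\inv$ on the left in the definition of $\oiP$ and compare: $\oiM^\inv\oiP\vx = \oiM^\inv\vx - \oiM^\inv\oiA\vtU\,\ip[\vsH]{\vtU}{\oiA\vtU}^\inv\ip[\vsH]{\vtU}{\vx}$. Applying $\oiP_\oiM\oiM^\inv$ to an arbitrary vector $\vw$ (i.e.\ substituting $\vx=\oiM^\inv\vw$ above) gives exactly $\oiM^\inv\oiP\vw$, which proves the identity. This is purely symbol-pushing.

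\emph{Item 2.} Here I would first check that the stated $\oiP^*$ is indeed the $\ipdots[\vsH]$-adjoint of $\oiP$: for any $\vx,\vy$, expand $\ip[\vsH]{\oiP\vx}{\vy}$, use self-adjointness of $\oiA$ to move it across the inner product, and note that $\ip[\vsH]{\vtU}{\oiA\vtU}$ is $\ipdots[\vsH]$-self-adjoint (because $\oiA$ is), so its inverse is too and transposing the scalar factor is harmless; the result matches $\ip[\vsH]{\vx}{\oiP^*\vy}$. Then $\oiP\oiA\vx = \oiA\vx - \oiA\vtU\,\ip[\vsH]{\vtU}{\oiA\vtU}^\inv\ip[\vsH]{\vtU}{\oiA\vx}$, and $\oiA\oiP^*\vx = \oiA\vx - \oiA\vtU\,\ip[\vsH]{\vtU}{\oiA\vtU}^\inv\ip[\vsH]{\oiA\vtU}{\vx}$; these agree since $\ip[\vsH]{\vtU}{\oiA\vx} = \ip[\vsH]{\oiA\vtU}{\vx}$ by self-adjointness of $\oiA$.

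\emph{Items 3--5.} Item~3 is immediate: the first equality $\oiP_\oiM\oiM^\inv\oiA = \oiM^\inv\oiP\oiA$ is Item~1 applied with an extra $\oiA$ on the right, the second $= \oiM^\inv\oiA\oiP^*$ is Item~2 multiplied by $\oiM^\inv$, and $\ipdots[\oiM]$-self-adjointness of $\oiM^\inv\oiA\oiP^*$ follows because $\ip[\oiM]{\oiM^\inv\oiA\oiP^*\vx}{\vy} = \ip[\vsH]{\oiA\oiP^*\vx}{\vy} = \ip[\vsH]{\oiP\oiA\vx}{\vy} = \ip[\vsH]{\oiA\vx}{\oiP^*\vy} = \ip[\oiM]{\vx}{\oiM^\inv\oiA\oiP^*\vy}$, using Item~2 and self-adjointness of $\oiA$. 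Item~4 then follows from the general theory recalled in section~\ref{sec:minres}: MINRES with inner product $\ipdots[\oiM]$ applied to a system whose operator is $\ipdots[\oiM]$-self-adjoint is well defined until termination (the Lanczos recurrence~\eqref{eq:lanzcos} and the QR-based minimization go through verbatim once self-adjointness is in hand). For Item~5 I would compute $\oiM^\inv\oiA\vx_n$ with $\vx_n$ given by~\eqref{eq:deflsyscorr}: $\oiM^\inv\oiA\oiP^*\til{\vx}_n = \oiP_\oiM\oiM^\inv\oiA\til{\vx}_n$ by Item~3, and $\oiM^\inv\oiA\vtU\,\ip[\vsH]{\vtU}{\oiA\vtU}^\inv\ip[\vsH]{\vtU}{\vb}$ is precisely $\oiM^\inv(\oiI-\oiP)\vb$ by definition of $\oiP$, which equals $\oiM^\inv\vb - \oiP_\oiM\oiM^\inv\vb$ by Item~1; subtracting from $\oiM^\inv\vb$ yields~\eqref{eq:deflsysres}. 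The $n=0$ case needs no separate argument since nothing above used $n>0$.

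I do not expect a genuine obstacle: the whole lemma is a sequence of rewritings that hinge on the single trick of converting $\ipdots[\oiM]$ into $\ipdots[\vsH]$ via $\ip[\oiM]{\vx}{\vy}=\ip[\vsH]{\oiM\vx}{\vy}=\ip[\vsH]{\vx}{\oiM\vy}$ and on the $\ipdots[\vsH]$-self-adjointness of $\oiA$ (and hence of the small matrix $\ip[\vsH]{\vtU}{\oiA\vtU}$ and its inverse). The only place demanding a little care is bookkeeping with the block-inner-product/right-multiplication notation of the two preceding definitions — making sure that factors of the form $\vtU M$ with $M\in\KK^{d,d}$ and block inner products $\ip[\vsH]{\vtU}{\cdot}$ are manipulated consistently, e.g.\ that $\ip[\vsH]{\vtU}{\oiA\vtU}^{-1}$ commutes past scalars correctly and that adjoints of such composite maps are taken in the right order. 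Once Items~1 and~2 are established cleanly, Items~3--5 are essentially corollaries.
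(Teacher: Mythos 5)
Your items 1, 2, 3, and 5 are correct and take essentially the same route as the paper: a sequence of direct rewritings exploiting $\ip[\oiM]{\cdot}{\cdot}=\ip[\vsH]{\oiM\cdot}{\cdot}=\ip[\vsH]{\cdot}{\oiM\cdot}$ together with the $\ipdots[\vsH]$-self-adjointness of $\oiA$; the self-adjointness chain you give for item~3 reproduces the paper's computation word for word, and your item~5 is the same calculation the paper performs.

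The genuine gap is item~4. You claim that once $\ipdots[\oiM]$-self-adjointness of $\oiP_\oiM\oiM^\inv\oiA$ is established, MINRES "is well defined until termination" by "the general theory recalled in section~\ref{sec:minres}". But that theory applies to the \emph{invertible} case, and the deflated operator $\oiP_\oiM\oiM^\inv\oiA$ is \emph{singular} whenever $d>0$: one has $\oiP_\oiM\oiM^\inv\oiA\vtU=0$ directly from the definition of $\oiP_\oiM$ (cf.\ the proof of lemma~\ref{lem:deflopspectrum}). For a singular self-adjoint operator, the Lanczos recursion still runs and each least-squares subproblem still has a solution, but the nontrivial part of the claim — that the iteration does not break down prematurely and that upon termination the iterate actually solves the (singular, consistent) system~\eqref{eq:deflsys}, rather than being merely a least-squares minimizer — is not automatic. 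The paper handles this by appealing to a dedicated result, \cite[Theorem 5.1]{GauGLN13}, whose hypotheses match the structure $\oiP_\oiM\oiM^\inv\oiA$ exactly; your argument needs that reference (or an equivalent direct analysis of MINRES/GMRES applied to a singular, consistent deflated system) to close item~4.
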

\begin{proof}
    Statements 1, 2 and the equation in 3 follow from elementary calculations.  Because
    \begin{align*}
        \ip[\oiM]{\oiP_\oiM \oiM^\inv \oiA\vx}{\vy}
        &= \ip[\vsH]{\oiP\oiA\vx}{\vy}
        = \ip[\vsH]{\oiA\vx}{\oiP^*\vy}
        = \ip[\vsH]{\vx}{\oiA\oiP^*\vy}
        = \ip[\vsH]{\vx}{\oiP\oiA\vy}\\
        &= \ip[\oiM]{\vx}{\oiP_\oiM\oiM^\inv\oiA\vy}.
    \end{align*}
    holds for all $\vx,\vy\in\vsH$, the operator $\oiP_\oiM \oiM^\inv \oiA$ is
    self-adjoint with respect to $\ipdots[\oiM]$.

    Statement 4 immediately follows from~\cite[Theorem 5.1]{GauGLN13} and the
    self-adjointness of $\oiP_\oiM \oiM^\inv \oiA$. Note that the referenced
    theorem is stated for the Euclidean inner product but it can easily be
    generalized to arbitrary inner products. Moreover, GMRES is mathematically
    equivalent to MINRES in our case, again due to the self-adjointness.

    Statement 5 follows from 1.\ and 3.\ by direct calculation:
    \begin{align*}
        \oiM^\inv\vb - \oiM^\inv\oiA\vx_n
        &= \oiM^\inv(\vb - \oiA\vtU\ip[\vsH]{\vtU}{\oiA\vtU}^\inv\ip[\vsH]{\vtU}{\vb})
          - \oiM^\inv\oiA\oiP^*\til{\vx}_n\\
        &= \oiM^\inv\oiP\vb - \oiP_\oiM\oiM^\inv\oiA\til{\vx}_n
        = \oiP_\oiM\oiM^\inv\vb -
            \oiP_\oiM\oiM^\inv\oiA\til{\vx}_n.
    \end{align*}
\end{proof}

Now that we know that MINRES is well-defined when applied to the deflated and
preconditioned equation~\eqref{eq:deflsys}, we want to investigate the
convergence behavior in comparison with the original preconditioned
equation~\eqref{eq:precsys}. The following result is well-known for the
positive-definite case; see, e.g., Saad, Yeung, Erhel, and
Guyomarc'h~\cite{SaaYEG00}. The proof is quite canonical and given here for
convenience of the reader.

\begin{lem}\label{lem:deflopspectrum}
    Let the assumptions in definition~\ref{def:proj} and
    $N\DEF\dim\vsH<\infty$ hold.  If
    $\sigma(\oiM^\inv\oiA)=\{\lambda_1,\ldots,\lambda_N\}$ is the spectrum of
    the preconditioned operator $\oiM^\inv\oiA$ and for $d>0$ the
    elements of $\vtU\in\vsH^d$ form a basis of the $\oiM^\inv\oiA$-invariant
    subspace corresponding to the eigenvalues $\lambda_1,\ldots,\lambda_d$ then
    the following holds:
    \begin{enumerate}
        \item The spectrum of the deflated operator $\oiP_\oiM \oiM^\inv \oiA$
            is
            \[
                \sigma(\oiP_\oiM \oiM^\inv \oiA)
                =\{0\}\cup\{\lambda_{d+1},\ldots,\lambda_N\}.
            \]
        \item For $n\geq 0$ let $\vx_n$ be the $n$th corrected approximation
            (cf.\ item~\ref{lem:deflsysprop:res} of lemma~\ref{lem:deflsysprop}) of MINRES
            applied to \eqref{eq:deflsys}  with inner product $\ipdots[\oiM]$
            and initial guess $\til{\vx}_0$. The residuals $\vr_n\DEF \oiM^\inv\vb
            - \oiM^\inv\oiA\vx_n$ then fulfill
            \begin{equation}\label{eq:deflsysconv}
                \frac{\nrm[\oiM]{\vr_n}}{%
                \nrm[\oiM]{\vr_0}}
                \leq \min_{{p\in\Pi_n^0}} \max_{i\in\{d+1,\ldots,N\}} |p(\lambda_i)|.
            \end{equation}
    \end{enumerate}
\end{lem}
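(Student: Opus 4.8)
The plan is to prove the two statements in sequence, both hinging on the block structure of $\oiM^\inv\oiA$ in an eigenbasis that is compatible with $\vtU$.

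For the first statement (the spectrum of the deflated operator), I would argue as follows. Extend the $\ipdots[\oiM]$-orthonormal basis $\vu_1,\ldots,\vu_d$ of $\range(\vtU)$ to a full $\ipdots[\oiM]$-orthonormal eigenbasis $\vtW=[\vu_1,\ldots,\vu_d,\vw_{d+1},\ldots,\vw_N]$ of $\oiM^\inv\oiA$; this is possible because $\oiM^\inv\oiA$ is $\ipdots[\oiM]$-self-adjoint and $\range(\vtU)$ is an invariant subspace (so its $\ipdots[\oiM]$-orthogonal complement is invariant too). In this basis $\oiM^\inv\oiA$ acts as $\ofD=\diag(\lambda_1,\ldots,\lambda_N)$. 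Now observe that, by the explicit formula in definition~\ref{def:proj} together with $\oiM^\inv\oiA\vtU=\vtU\ofD_U$ (where $\ofD_U=\diag(\lambda_1,\ldots,\lambda_d)$) and $\ipdots[\oiM]$-orthonormality, the projection $\oiP_\oiM$ satisfies $\oiP_\oiM\vu_j=\vu_j-\oiM^\inv\oiA\vu_j\,\lambda_j^\inv=\vu_j-\vu_j=0$ for $j\le d$, while $\oiP_\oiM\vw_i=\vw_i$ for $i>d$ because $\ip[\oiM]{\vtU}{\vw_i}=0$. Hence in the basis $\vtW$ the operator $\oiP_\oiM\oiM^\inv\oiA$ is represented by the diagonal matrix $\diag(0,\ldots,0,\lambda_{d+1},\ldots,\lambda_N)$, which immediately gives $\sigma(\oiP_\oiM\oiM^\inv\oiA)=\{0\}\cup\{\lambda_{d+1},\ldots,\lambda_N\}$.

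For the second statement (the convergence bound), I would combine lemma~\ref{lem:deflsysprop} with a polynomial-minimization argument analogous to the derivation of \eqref{eq:minresbound}. By items~\ref{lem:deflsysprop:selfadj} and~\ref{lem:deflsysprop:res} of lemma~\ref{lem:deflsysprop}, the MINRES residuals for \eqref{eq:deflsys} with inner product $\ipdots[\oiM]$ coincide with $\vr_n=\oiM^\inv\vb-\oiM^\inv\oiA\vx_n$, and since $\oiP_\oiM\oiM^\inv\oiA$ is $\ipdots[\oiM]$-self-adjoint, MINRES produces iterates whose residual equals $p(\oiP_\oiM\oiM^\inv\oiA)\vr_0$ minimized over $p\in\Pi_n^0$, with $\vr_0=\oiP_\oiM\oiM^\inv\vb-\oiP_\oiM\oiM^\inv\oiA\til{\vx}_0$ lying in $\range(\oiP_\oiM)=\range(\vtU)^{\perp_\oiM}=\spn\{\vw_{d+1},\ldots,\vw_N\}$. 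Expanding $\vr_0=\sum_{i>d}c_i\vw_i$ in the eigenbasis and using that $\oiP_\oiM\oiM^\inv\oiA$ acts as multiplication by $\lambda_i$ on each $\vw_i$ (so $p(\oiP_\oiM\oiM^\inv\oiA)\vr_0=\sum_{i>d}p(\lambda_i)c_i\vw_i$), $\ipdots[\oiM]$-orthonormality of the $\vw_i$ gives $\nrm[\oiM]{p(\oiP_\oiM\oiM^\inv\oiA)\vr_0}\le\max_{i>d}|p(\lambda_i)|\,\nrm[\oiM]{\vr_0}$, and taking the minimum over $p\in\Pi_n^0$ yields \eqref{eq:deflsysconv}.

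The main subtlety I anticipate is the bookkeeping at $n=0$ and the justification that MINRES applied to the singular operator $\oiP_\oiM\oiM^\inv\oiA$ still admits the clean polynomial representation of its residuals on the invariant subspace $\range(\oiP_\oiM)$ — i.e., that the zero eigenvalue does no harm because $\vr_0$ has no component in the kernel and the kernel is $\oiP_\oiM\oiM^\inv\oiA$-invariant. This is exactly what item~4 of lemma~\ref{lem:deflsysprop} (well-definedness, via \cite[Theorem~5.1]{GauGLN13}) is there to guarantee, so I would lean on it rather than re-prove it. Everything else is the routine manipulation of diagonalized operators already rehearsed in the derivation of \eqref{eq:minresbound}.
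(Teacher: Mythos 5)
Your proof is correct and follows essentially the same route as the paper's: diagonalize $\oiM^\inv\oiA$ in an $\ipdots[\oiM]$-orthonormal eigenbasis adapted to $\range(\vtU)$, observe that $\oiP_\oiM$ annihilates the $\vtU$-part and acts as the identity on the complementary invariant subspace, and then run the standard polynomial argument for the residual bound. The only cosmetic difference is that you silently replace the given $\vtU$ (merely a basis of the invariant subspace) by an $\ipdots[\oiM]$-orthonormal eigenvector basis before computing $\oiP_\oiM\vu_j$ explicitly; this is harmless since $\oiP_\oiM$ depends only on $\range(\vtU)$, but the paper avoids the issue altogether by showing $\oiP_\oiM\oiM^\inv\oiA\vtU=0$ directly from the definition, without using any special structure of $\vtU$.
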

\begin{proof}
    \begin{enumerate}
        \item From the definition of $\oiP_\oiM$ in definition~\ref{def:proj} we
            directly obtain $\oiP_\oiM \oiM^\inv\oiA \vtU = 0$ and thus know
            that $0$ is an eigenvalue of $\oiP_\oiM\oiM^\inv\oiA$ with
            multiplicity at least $d$.
            Let the elements of $\vtV\in\vsH^{N-d}$ be orthonormal and such that
            $\oiM^\inv\oiA\vtV=\vtV\ofD_2$ with
            $\ofD_2=\diag(\lambda_{d+1},\ldots,\lambda_N)$. Then
            $\ip[\oiM]{\vtU}{\vtV}=0$ because $\oiM^\inv\oiA$ is self-adjoint
            with respect to $\ipdots[\oiM]$. Thus $\oiP_\oiM\vtV=\vtV$ and the
            statement follows from $\oiP_\oiM\oiM^\inv\oiA\vtV=\vtV\ofD_2$.
        \item Because the residual corresponding to the corrected initial guess
            is $\vr_0=\oiP_\oiM\oiM^\inv(\vb-\oiA\til{\vx}_0) \in
            \range(\vtU)^{\perp_\oiM}=\range(\vtV)$, where $\vtV$ is defined as in
            1., we have $\vr_0=\vtV\vr_0^\vtV$ for a $\vr_0^\vtV\in\KK^{N-d}$.
            Then with $\ofD_2$ as in 1.\ we obtain by using the orthonormality of
            $\vtV$ similar to \eqref{eq:minresbound}:
            \begin{multline*}
                \nrm[\oiM]{\vr_n}
                = \min_{p\in\Pi_n^0}
                    \nrm[\oiM]{p(\oiP_\oiM\oiM^\inv\oiA)\vtV\vr_0^\vtV}
                = \min_{p\in\Pi_n^0}
                    \nrm[\oiM]{\vtV p(\ofD_2) \vr_0^\vtV}\\
                = \min_{p\in\Pi_n^0}
                    \nrm[2]{p(\ofD_2) \vr_0^\vtV}
                \leq \nrm[\oiM]{\vr_0} \min_{p\in\Pi_n^0}
                    \max_{i\in\{d+1,\ldots,N\}} |p(\lambda_i)|.
            \end{multline*}
    \end{enumerate}
\end{proof}

\paragraph{Notes on the implementation}
Item~\ref{lem:deflsysprop:commute} of lemma~\ref{lem:deflsysprop} states that
$\oiP_\oiM \oiM^\inv = \oiM^\inv \oiP$ and thus the MINRES method can be applied
to the linear system
\begin{align} \label{eq:deflsys2}
    \oiM^\inv \oiP \oiA \til{\vx} = \oiM^\inv \oiP \vb
\end{align}
instead of \eqref{eq:deflsys}. When an approximate solution $\til{\vx}_n$ of
\eqref{eq:deflsys2} is satisfactory then the correction~\eqref{eq:deflsyscorr}
has to be applied to obtain an approximate solution of the original
system~\eqref{eq:LS}. Note that neither $\oiM$ nor its inverse $\oiM^\inv$ show
up in the definition of the operator $\oiP$ or its adjoint operator $\oiP^*$
which is used in the correction. Thus the preconditioner $\oiM^\inv$ does not
have to be applied to additional vectors if deflation is used. This can be a
major advantage since the application of the preconditioner operator
$\oiM^\inv$ is the most expensive part in many applications.

The deflation operator $\oiP$ as defined in definition~\ref{def:proj} with
$\vtU\in\vsH^d$ needs to store $2d$ vectors because aside from $\vtU$ also
$\vtC\DEF\oiA\vtU$ should be pre-computed and stored.  Furthermore the matrix
$\ofE\DEF\ip[\vsH]{\vtU}{\vtC}\in\KK^{d,d}$ or its inverse have to be stored.
The adjoint operator $\oiP^*$ needs exactly the same data so no more storage is
required. The construction of $\vtC$ needs $d$ applications of the operator
$\oiA$ but -- as stated above -- no application of the preconditioner operator
$\oiM^\inv$. Because $\ofE$ is Hermitian $d(d+1)/2$ inner products have to be
computed. One application of $\oiP$ or $\oiP^*$ requires $d$ inner products,
the solution of a linear system with the Hermitian $d$-by-$d$ matrix $\ofE$ and
$d$ vector updates. We gather this information in table~\ref{tab:deflcost}.

\begin{table}[bt]
    \centering
    \caption{Storage requirements and computational cost of the projection
    operators $\oiP$ and $\oiP^*$ (cf.~definition~\ref{def:proj} and
    lemma~\ref{lem:deflsysprop}). All vectors are of length $N$,
    i.e., the number of degrees of freedom of the underlying problem.
    Typically, $N\gg d$.}
    \subcaptionbox{Storage requirements.}{%
        \begin{tabular}{lcc}
            \toprule
            & vectors & other\\
            \midrule
            $\vtU$          & $d$ & -- \\
            $\vtC=\oiA\vtU$ & $d$ & -- \\
            $\ofE=\ip[\vsH]{\vtU}{\vtC}$ or $\ofE^\inv$ & -- & $d^2$\\
            \bottomrule
        \end{tabular}
    }\\[3ex]

    \subcaptionbox{Computational cost.}{%
        \begin{tabular}{l>{\centering\arraybackslash}m{1cm}>{\centering\arraybackslash}m{1cm}ccc}
            \toprule
             & \multicolumn{2}{c}{applications of}
                                    & vector     & inner      & solve \\
             & $\oiA$ & $\oiM^\inv$ & updates    & products   & with $\ofE$ \\
            \midrule
            Construction of $\vtC$ and $\ofE$
             & $d$    & --          & --         & $d(d+1)/2$ & -- \\
            Application of $\oiP$ or $\oiP^*$
             & --     & --          & $d$        & $d$        & 1 \\
            Application of correction
            & --     & --          & $d$        & $d$       & 1 \\
            \bottomrule
        \end{tabular}
      }
    \label{tab:deflcost}
\end{table}

Instead of correcting the last approximation $\til{\vx}_n$ it is also possible
to start with the corrected initial guess
\begin{equation}\label{eq:x0correct}
    \vx_0=\oiP^* \til{\vx}_0 +
    \vtU\ip[\vsH]{\vtU}{\oiA\vtU}^\inv \ip[\vsH]{\vtU}{\vb}
\end{equation}
and to use $\oiP^*$ as
a right ``preconditioner'' (note that $\oiP^*$ is singular in general). The
difference is mainly of algorithmic nature and will be described very briefly.

For an invertible linear operator $\oiB\in\vsL(\vsH)$ the right-preconditioned
system $\oiA \oiB \vy = \vb$ can be solved for $\vy$ and then the original solution
can be obtained from $\vx=\oiB\vy$. Instead of $\vx_0$ the initial guess $\vy_0\DEF\oiB^\inv\vx_0$ is
used and the initial residual $\vr_0=\vb -\oiA \oiB \vy_0=\vb-\oiA\vx_0$ equals
the residual of the unpreconditioned system. Then iterates
\[
    \vy_n = \vy_0 + \vz_n \quad \text{with} \quad \vz_n\in\K_n(\oiA\oiB,\vr_0)
\]
and $\vx_n\DEF\oiB\vy_n = \vx_0 + \oiB\vz_n$ are constructed such that the residual $\vr_n =
\vb-\oiA\oiB\vy_n = \vb - \oiA\vx_n$ is minimal in $\nrm[\vsH]{\cdot}$. If the operator
$\oiA\oiB$ is self-adjoint the MINRES method can again be used to solve this
minimization problem. Note that $\vy_0$ is not needed and will never be computed
explicitly. The right preconditioning can of course be combined with a positive
definite preconditioner as described in the introduction of
section~\ref{sec:minres}.

We now take a closer look at the case $\oiB=\oiP^*$ which differs from the above
description because $\oiP^*$ is not invertible in general. However, even if the
right-preconditioned system is not consistent (i.e.,
$\vb\notin\range(\oiA\oiP^*)$) the above strategy can be used to solve the
original linear system. With $\vx_0$ from equation~\eqref{eq:x0correct},
let us construct iterates
\begin{equation}\label{eq:iterateright}
    \vx_n = \vx_0 + \oiP^*\vy_n \quad \text{with} \quad \vy_n\in
    \K_n(\oiM^\inv\oiA\oiP^*,\vr_0)
\end{equation}
such that the residual
\begin{equation}\label{eq:resright}
    \vr_n = \oiM^\inv \vb - \oiM^\inv \oiA \vx_n
\end{equation}
has minimal $\nrm[\oiM]{\cdot}$-norm. Inserting \eqref{eq:iterateright} and the
definition of $\vx_0$ into \eqref{eq:resright} yields $\vr_n = \oiM^\inv \oiP
\vb - \oiM^\inv \oiP \oiA \vy_n$ with $\vy_n \in \K_n(\oiM^\inv\oiA\oiP^*,\vr_0)
= \K_n(\oiM^\inv\oiP\oiA,\vr_0)$. The minimization problem is thus the same as
in the case where MINRES is applied to the linear system~\eqref{eq:deflsys2} and
because both the operators and initial vectors coincide the same Lanczos
relation holds. Consequently the MINRES method can be applied for the right
preconditioned system
\begin{equation}\label{eq:deflsysright}
    \oiM^\inv \oiA \oiP^* \vy =\oiM^\inv \vb, \quad \vx =\oiP^*\vy
\end{equation}
with the corrected initial guess $\vx_0$ from equation~\eqref{eq:x0correct}. The
key issue here is that the initial guess is treated as
in~\eqref{eq:iterateright}. A deflated and preconditioned MINRES implementation
following these ideas only needs the operator $\oiP^*$ and the corrected initial
guess $\vx_0$. A correction step at the end then is unnecessary.

\subsection{Ritz vector computation}
\label{sec:minres:ritz}

So far we considered a single linear system and assumed that a basis
for the construction of the projection used in the deflated system is given
(e.g., eigenvectors are given). We now turn to a sequence of preconditioned linear
systems
\begin{equation}\label{eq:seq}
    \oiM_\idx^\inv \oiA_\idx \vx^\idx = \oiM_\idx^\inv \vb^\idx
\end{equation}
where $\oiM_\idx,\oiA_\idx\in\vsL(\vsH)$ are invertible and
self-adjoint with respect to $\ipdots[\vsH]$, $\oiM_\idx$ is positive
definite and $\vx^\idx,\vb^\idx\in\vsH$ for $k\in\{1,\ldots,M\}$. To improve the
readability we use subscript indices for operators and superscript indices for
elements or tuples of the Hilbert space $\vsH$. Such a sequence may arise from a
time dependent problem or a nonlinear equation where solutions are approximated
using Newton's method (cf.~section~\ref{sec:nls}). We now assume that the
operator $\oiM_{(k+1)}^\inv\oiA_{(k+1)}$ only differs slightly from the previous
operator $\oiM_\idx^\inv\oiA_\idx$. Then it may be worthwhile to extract some
eigenvector approximations from the Krylov subspace \emph{and} the deflation
subspace used in the solution of the $k$th system in order to accelerate
convergence of the next system by deflating these extracted approximate
eigenvectors.

For explaining the strategy in more detail we omit the sequence index for a
moment and always refer to the $k$th linear system if not specified otherwise.
Assume that we used a tuple $\vtU\in\vsH^d$ whose elements form a
$\ipdots[\oiM]$-orthonormal basis to set up the projection $\oiP_\oiM$
(cf.~definition~\ref{def:proj}) for the $k$th linear system~\eqref{eq:seq}. We then
assume that the deflated and preconditioned MINRES method, with inner product
$\ipdots[\oiM]$ and initial guess $\til{\vx}_0$, computed a
satisfactory approximate solution after $n$ steps. The MINRES method then
constructs a basis of the Krylov subspace $\K_n(\oiP_\oiM\oiM^\inv\oiA,\vr_0)$ where
the initial residual is $\vr_0=\oiP_\oiM\oiM^\inv(\vb-\oiA\til{\vx}_0)$. Due to
the definition of the projection we know that
$\K_n(\oiP_\oiM\oiM^\inv\oiA,\vr_0)\perp_\oiM \range(\vtU)$ and we now wish to
compute approximate eigenvectors of $\oiM^\inv\oiA$ in the subspace
$\vsS\DEF\K_n(\oiP_\oiM\oiM^\inv\oiA,\vr_0)\oplus \range(\vtU)$. We can
then pick some approximate eigenvectors according to the corresponding
approximate eigenvalues and the approximation quality in order to construct a
projection for the deflation of the $(k+1)$st linear system.

Let us recall the definition of Ritz pairs~\cite{PaiPV95}:

\begin{dftn}\label{def:ritz}
    Let $\vsS\subseteq \vsH$ be a finite dimensional subspace and let
    $\oiB\in\vsL(\vsH)$ be a linear operator.
    $(\vw,\mu)\in\vsS\times\C$ is called a
            Ritz pair of $\oiB$ with respect to $\vsS$ and the inner product
            $\ipdots$ if
            \[
                \oiB\vw - \mu\vw \perp_{\ipdots} \vsS.
            \]
\end{dftn}

The following lemma gives insight into how the Ritz pairs of the operator
$\oiM^\inv\oiA$ with respect to the Krylov subspace
$\K_n(\oiP_\oiM\oiM^\inv\oiA,\vr_0)$ and the deflation subspace $\range(\vtU)$
can be obtained from data that is available when the MINRES method found a
satisfactory approximate solution of the last linear system.

\begin{lem}\label{lem:ritz}
  \rev{
  Let the following assumptions hold:
  \begin{itemize}
    \item Let $\oiM,\oiA,\vtU,\oiP_\oiM$ be defined as in
      definition~\ref{def:proj} and let $\ip[\oiM]{\vtU}{\vtU}=\ofI_d$.
    \item The Lanczos algorithm with inner product $\ipdots[\oiM]$ applied
      to the operator $\oiP_\oiM\oiM^\inv\oiA$ and an initial vector
      $\vv\in\range(\vtU)^{\perp_\oiM}$ proceeds to the $n$th iteration.
      The Lanczos relation is
      \begin{align}\label{lem:ritz:lanczos}
          \oiP_\oiM\oiM^\inv\oiA \vtV_n = \vtV_{n+1}\underline{\ofT}_n
      \end{align}
      with $\vtV_{n+1}=[\vv_1,\ldots,\vv_{n+1}]\in\vsH^{n+1}$,
      $\ip[\oiM]{\vtV_{n+1}}{\vtV_{n+1}}=\ofI_{n+1}$ and
      $\underline{\ofT}_n=\mat{\ofT_n\\0\dots 0~s_n}\in\R^{n+1,n}$ where
      $s_n\in\R$ is positive and $\ofT_n\in\R^{n,n}$ is tridiagonal, symmetric,
      and real-valued.
  \item
    Let $\vsS\DEF \K_n(\oiP_\oiM\oiM^\inv\oiA,\vv)\oplus\range(\vtU)$
    and $\vw\DEF[\vtV_n,\vtU]\til{\vw}\in\vsS$ for a $\til{\vw}\in\KK^{n+d}$.
  \end{itemize}
  }

    Then $(\vw,\mu)\in\vsS\times\R$ is a
            Ritz pair of $\oiM^\inv\oiA$ with respect to $\vsS$ and the inner
            product $\ipdots[\oiM]$ if and only if
            \begin{equation}\label{lem:ritz:ritz}
                \mat{%
                    \ofT_n + \ofB\ofE^\inv\ofB^\htp & \ofB\\
                    \ofB^\htp & \ofE} \til{\vw} = \mu \til{\vw}
            \end{equation}
            where $\ofB\DEF \ip[\vsH]{\vtV_n}{\oiA\vtU}$ and
            $\ofE\DEF\ip[\vsH]{\vtU}{\oiA\vtU}$.

    Furthermore, the squared $\nrm[\oiM]{\cdot}$-norm of the
    Ritz residual $\oiM^\inv\oiA\vw - \mu\vw$ is
    \begin{equation}\label{lem:ritz:res}
        \nrm[\oiM]{\oiM^\inv\oiA\vw - \mu\vw}^2
            = (\ofG\til{\vw})^\htp
            \mat{\ofI_{n+1} & \underline{\ofB} & 0 \\
                \underline{\ofB}^\htp & \ofF & \ofE \\
                0 & \ofE & \ofI_d} \ofG\til{\vw}
    \end{equation}
    where
    \begin{align*}
        \underline{\ofB}&=\ip[\vsH]{\vtV_{n+1}}{\oiA\vtU}=
            \mat{\ofB\\ \ip[\vsH]{\vv_{n+1}}{\oiA\vtU}},\\
        \ofF&=\ip[\vsH]{\oiA\vtU}{\oiM^\inv\oiA\vtU}\quad\text{and}\\
        \ofG&=\mat{\underline{\ofT}_n -\mu \underline{\ofI}_n & 0\\
            \ofE^\inv\ofB^\htp & \ofI_d\\
            0 & -\mu\ofI_d
            }\quad\text{with}\quad \underline{\ofI}_n=\mat{\ofI_n\\0}.
    \end{align*}
\end{lem}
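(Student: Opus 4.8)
The plan is to turn both claims into block Gram-matrix identities, relative to the block $[\vtV_n,\vtU]$ for the eigenvalue equation and to $[\vtV_{n+1},\oiM^\inv\oiA\vtU,\vtU]$ for the residual. Write $\til{\vw}=\mat{\vy\\ \vz}$ with $\vy\in\KK^n$, $\vz\in\KK^d$, so $\vw=\vtV_n\vy+\vtU\vz$. Two preliminary facts carry everything. First, every Lanczos vector lies in $\range(\oiP_\oiM)=\range(\vtU)^{\perp_\oiM}$: this holds for $\vv_1$ since it is proportional to $\vv\in\range(\vtU)^{\perp_\oiM}$, and $\K_{n+1}(\oiP_\oiM\oiM^\inv\oiA,\vv)\subseteq\range(\vtU)^{\perp_\oiM}$ because applying $\oiP_\oiM\oiM^\inv\oiA$ never leaves $\range(\oiP_\oiM)$; hence $\ip[\oiM]{\vtV_{n+1}}{\vtU}=0$, and together with $\ip[\oiM]{\vtV_{n+1}}{\vtV_{n+1}}=\ofI_{n+1}$ and $\ip[\oiM]{\vtU}{\vtU}=\ofI_d$ this makes $[\vtV_{n+1},\vtU]$ (hence $[\vtV_n,\vtU]$) $\ipdots[\oiM]$-orthonormal. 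Second, since $(\oiI-\oiP_\oiM)\vx=(\oiM^\inv\oiA\vtU)\ofE^\inv\ip[\oiM]{\vtU}{\vx}$ and $\ip[\oiM]{\vtU}{\oiM^\inv\oiA\vtV_n}=\ip[\vsH]{\vtU}{\oiA\vtV_n}=\ofB^\htp$, the Lanczos relation~\eqref{lem:ritz:lanczos} lifts to the full operator,
\[
  \oiM^\inv\oiA\vtV_n=\oiP_\oiM\oiM^\inv\oiA\vtV_n+(\oiI-\oiP_\oiM)\oiM^\inv\oiA\vtV_n=\vtV_{n+1}\underline{\ofT}_n+(\oiM^\inv\oiA\vtU)\ofE^\inv\ofB^\htp,
\]
which is the one nonroutine ingredient. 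Besides these I use only the transfer identities $\ip[\oiM]{\vx}{\oiM^\inv\oiA\vy}=\ip[\vsH]{\vx}{\oiA\vy}$ and $\ip[\oiM]{\oiM^\inv\oiA\vx}{\vy}=\ip[\vsH]{\oiA\vx}{\vy}$ (immediate from $\ip[\oiM]{\vx}{\vy}=\ip[\vsH]{\oiM\vx}{\vy}=\ip[\vsH]{\vx}{\oiM\vy}$), the $\ipdots[\vsH]$-self-adjointness of $\oiA$ (so $\ofE=\ofE^\htp$, $\ip[\vsH]{\vtU}{\oiA\vtV_n}=\ofB^\htp$, $\ip[\vsH]{\oiA\vtU}{\vtU}=\ofE$), and block bilinearity $\ip[\oiM]{\vtX\ofZ_1}{\vtY\ofZ_2}=\ofZ_1^\htp\ip[\oiM]{\vtX}{\vtY}\ofZ_2$.

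For the first claim, $\vsS=\range([\vtV_n,\vtU])$, so the Ritz condition $\oiM^\inv\oiA\vw-\mu\vw\perp_\oiM\vsS$ is equivalent to $\ip[\oiM]{[\vtV_n,\vtU]}{\oiM^\inv\oiA\vw}=\mu\,\ip[\oiM]{[\vtV_n,\vtU]}{\vw}$; by orthonormality the right-hand side is $\mu\til{\vw}$, so the condition reads $\ip[\vsH]{[\vtV_n,\vtU]}{\oiA[\vtV_n,\vtU]}\,\til{\vw}=\mu\til{\vw}$. The off-diagonal and $(2,2)$ blocks of this $2\times2$ block matrix are $\ip[\vsH]{\vtV_n}{\oiA\vtU}=\ofB$, $\ip[\vsH]{\vtU}{\oiA\vtV_n}=\ofB^\htp$, $\ip[\vsH]{\vtU}{\oiA\vtU}=\ofE$, while the $(1,1)$ block is $\ip[\oiM]{\vtV_n}{\oiM^\inv\oiA\vtV_n}$; substituting the lifted Lanczos identity and using $\ip[\oiM]{\vtV_n}{\vtV_{n+1}}\underline{\ofT}_n=\ofT_n$ together with $\ip[\oiM]{\vtV_n}{\oiM^\inv\oiA\vtU}=\ofB$ gives $\ofT_n+\ofB\ofE^\inv\ofB^\htp$. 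This is exactly~\eqref{lem:ritz:ritz}. The matrix is Hermitian (it equals $\ip[\oiM]{[\vtV_n,\vtU]}{\oiM^\inv\oiA[\vtV_n,\vtU]}$ with $[\vtV_n,\vtU]$ $\ipdots[\oiM]$-orthonormal and $\oiM^\inv\oiA$ $\ipdots[\oiM]$-self-adjoint), so any Ritz value is automatically real and the restriction $\mu\in\R$ costs nothing.

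For the residual formula, expand $\oiM^\inv\oiA\vw-\mu\vw$ in the block $[\vtV_{n+1},\oiM^\inv\oiA\vtU,\vtU]$: from $\oiM^\inv\oiA\vw=\oiM^\inv\oiA\vtV_n\vy+\oiM^\inv\oiA\vtU\vz$, the lifted Lanczos identity, and $\vw=\vtV_{n+1}\underline{\ofI}_n\vy+\vtU\vz$, collecting coefficients yields $\oiM^\inv\oiA\vw-\mu\vw=[\vtV_{n+1},\oiM^\inv\oiA\vtU,\vtU]\,\ofG\til{\vw}$ with $\ofG$ exactly the matrix in the statement, the middle block-row $\ofE^\inv\ofB^\htp\vy+\vz$ being the only nonobvious piece, contributed by the rank-$d$ correction. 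Taking $\nrm[\oiM]{\cdot}^2$ and pulling out $\ofG\til{\vw}$ by block bilinearity gives $(\ofG\til{\vw})^\htp\,\ip[\oiM]{[\vtV_{n+1},\oiM^\inv\oiA\vtU,\vtU]}{[\vtV_{n+1},\oiM^\inv\oiA\vtU,\vtU]}\,(\ofG\til{\vw})$; evaluating the nine blocks of that Gram matrix with the transfer identities and self-adjointness gives $\ip[\oiM]{\vtV_{n+1}}{\vtV_{n+1}}=\ofI_{n+1}$, $\ip[\oiM]{\vtV_{n+1}}{\oiM^\inv\oiA\vtU}=\underline{\ofB}$, $\ip[\oiM]{\vtV_{n+1}}{\vtU}=0$, $\ip[\oiM]{\oiM^\inv\oiA\vtU}{\oiM^\inv\oiA\vtU}=\ip[\vsH]{\oiA\vtU}{\oiM^\inv\oiA\vtU}=\ofF$, $\ip[\oiM]{\oiM^\inv\oiA\vtU}{\vtU}=\ofE$ and $\ip[\oiM]{\vtU}{\vtU}=\ofI_d$, which is precisely the $3\times3$ block matrix in~\eqref{lem:ritz:res}.

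The whole proof is bookkeeping; the one place to be careful — and the crux — is that the Lanczos recurrence ran with $\oiP_\oiM\oiM^\inv\oiA$, not with $\oiM^\inv\oiA$, so expressing $\oiM^\inv\oiA$ on $\range(\vtV_n)$ requires reinstating the rank-$d$ term $(\oiI-\oiP_\oiM)\oiM^\inv\oiA\vtV_n=(\oiM^\inv\oiA\vtU)\ofE^\inv\ofB^\htp$. That term is the source of both the $\ofB\ofE^\inv\ofB^\htp$ summand in~\eqref{lem:ritz:ritz} and the $\ofE^\inv\ofB^\htp$ block of $\ofG$; once it is in hand, $\ipdots[\oiM]$-orthonormality of $[\vtV_{n+1},\vtU]$ and the $\oiM$-versus-$\vsH$ transfer identities close everything.
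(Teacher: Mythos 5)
Your proof is correct and follows essentially the same route as the paper: characterize the Ritz condition via the $2\times 2$ block Gram matrix $\ip[\oiM]{[\vtV_n,\vtU]}{\oiM^\inv\oiA[\vtV_n,\vtU]}$ using the $\ipdots[\oiM]$-orthonormality of $[\vtV_n,\vtU]$, lift the deflated Lanczos relation to $\oiM^\inv\oiA\vtV_n=\vtV_{n+1}\underline{\ofT}_n+\oiM^\inv\oiA\vtU\,\ofE^\inv\ofB^\htp$ (the paper's eq.~\eqref{lem:ritz:lanczos2}) to fill in the $(1,1)$ block, and expand $\oiM^\inv\oiA\vw-\mu\vw$ in the block $[\vtV_{n+1},\oiM^\inv\oiA\vtU,\vtU]$ to read off $\ofG$ and its Gram matrix. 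The small additions you make — explicitly verifying $\ip[\oiM]{\vtV_{n+1}}{\vtU}=0$ from the invariance of $\range(\vtU)^{\perp_\oiM}$ under $\oiP_\oiM\oiM^\inv\oiA$, and observing that $\mu\in\R$ comes for free from the $\ipdots[\oiM]$-self-adjointness of $\oiM^\inv\oiA$ — are consistent with what the paper uses implicitly.
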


\begin{proof}
            $(\vw,\mu)$ is a Ritz pair of $\oiM^\inv\oiA$ with respect to
            $\vsS=\range([\vtV_n,\vtU])$ and the inner product $\ipdots[\oiM]$
            if and only if
            \begin{align*}
                &\oiM^\inv\oiA\vw - \mu\vw \perp_\oiM \vsS\\
                \LLRA \qquad & \ip[\oiM]{\vs}{\oiM^\inv\oiA\vw - \mu\vw }
                    = 0 \quad \forall \vs\in\vsS\\
                \LLRA \qquad & \ip[\oiM]{[\vtV_n,\vtU]}{(\oiM^\inv\oiA - \mu\oiI)
                    [\vtV_n,\vtU]}\til{\vw} = 0\\
                \LLRA \qquad & \ip[\oiM]{[\vtV_n,\vtU]}{\oiM^\inv\oiA
                    [\vtV_n,\vtU]}\til{\vw} =
                    \mu \ip[\oiM]{[\vtV_n,\vtU]}{[\vtV_n,\vtU]}\til{\vw}\\
                \LLRA \qquad & \ip[\oiM]{[\vtV_n,\vtU]}{\oiM^\inv\oiA
                    [\vtV_n,\vtU]}\til{\vw} =
                    \mu \til{\vw}
            \end{align*}
            where the last equivalence follows from the orthonormality of $\vtU$ and
            $\vtV_n$ and the fact that $\range(\vtU) \perp_\oiM \K_n(\oiP_\oiM
            \oiM^\inv\oiA,\vv) = \range(\vtV_n)$. We decompose the left hand side as
            \[
                \ip[\oiM]{[\vtV_n,\vtU]}{\oiM^\inv\oiA [\vtV_n,\vtU]}
                = \mat{ \ip[\oiM]{\vtV_n}{\oiM^\inv\oiA\vtV_n}
                    &\ip[\oiM]{\vtV_n}{\oiM^\inv\oiA\vtU}\\
                    \ip[\oiM]{\vtU}{\oiM^\inv\oiA\vtV_n}
                    & \ip[\oiM]{\vtU}{\oiM^\inv\oiA\vtU} }.
            \]
            The Lanczos relation~\eqref{lem:ritz:lanczos} is equivalent to
            \begin{equation}\label{lem:ritz:lanczos2}
                \oiM^\inv\oiA\vtV_n = \vtV_{n+1} \underline{\ofT}_n + \oiM^\inv
                    \oiA\vtU\ip[\vsH]{\vtU}{\oiA\vtU}^\inv
                    \ip[\vsH]{\oiA\vtU}{\vtV_n}
            \end{equation}
            from which we can conclude with the $\ipdots[\oiM]$-orthonormality of
            $[\vtV_{n+1},\vtU]$ that
            \begin{align*}
                \ip[\oiM]{\vtV_n}{\oiM^\inv\oiA\vtV_n}
                &= \ip[\oiM]{\vtV_n}{\vtV_{n+1}}\underline{\ofT}_n +
                    \ip[\oiM]{\vtV_n}{\oiM^\inv\oiA\vtU}
                    \ip[\vsH]{\vtU}{\oiA\vtU}^\inv\ip[\vsH]{\oiA\vtU}{\vtV_n}\\
                &= \ofT_n + \ip[\vsH]{\vtV_n}{\oiA\vtU}
                    \ip[\vsH]{\vtU}{\oiA\vtU}^\inv\ip[\vsH]{\oiA\vtU}{\vtV_n}.
            \end{align*}
            The characterization of Ritz pairs is complete by recognizing that
            $\ofB=\ip[\oiM]{\vtV_n}{\oiM^\inv\oiA\vtU} = \ip[\vsH]{\vtV_n}{\oiA\vtU} =
            \ip[\oiM]{\vtU}{\oiM^\inv\oiA\vtV_n}^\htp$ and
            $\ofE=\ip[\oiM]{\vtU}{\oiM^\inv\oiA\vtU}=\ip[\vsH]{\vtU}{\oiA\vtU}$.

    Only the residual norm equation remains to show. Therefore we compute
    with \eqref{lem:ritz:lanczos2}
    \begin{align*}
        \oiM^\inv\oiA\vw -\mu\vw
        &= \oiM^\inv\oiA [\vtV_n,\vtU] \til{\vw}- \mu [\vtV_n,\vtU] \til{\vw}
        \\
        &= [\vtV_{n+1},\oiM^\inv\oiA\vtU,\vtU]
        \mat{\underline{\ofT}_n- \mu\underline{\ofI}_n & 0\\
            \ofE^\inv\ofB^\htp & \ofI_d\\
            0   & -\mu \ofI_d} \til{\vw} \\
        &= [\vtV_{n+1},\oiM^\inv\oiA\vtU,\vtU] \ofG\til{\vw}.
    \end{align*}
    The squared residual $\nrm[\oiM]{\cdot}$-norm thus is
    \[
        \nrm[\oiM]{\oiM^\inv\oiA\vw -\mu\vw}^2
        = (\ofG\til{\vw})^\htp \ip[\oiM]{[\vtV_{n+1},\oiM^\inv\oiA\vtU,\vtU]}{
            [\vtV_{n+1},\oiM^\inv\oiA\vtU,\vtU]} \ofG\til{\vw}
    \]
    where $\ip[\oiM]{[\vtV_{n+1},\oiM^\inv\oiA\vtU,\vtU]}{
    [\vtV_{n+1},\oiM^\inv\oiA\vtU,\vtU]} = \mat{ \ofI_{n+1} & \underline{\ofB} &
    0 \\ \underline{\ofB}^\htp & \ofF & \ofE \\ 0 & \ofE & \ofI_d}$ can be shown
    with the same techniques as in 1.\ and 2.
\end{proof}
\begin{rmk}
    Lemma~\ref{lem:ritz} also holds for the (rare) case that
    $\K_n(\oiP_\oiM\oiM^\inv\oiA,\vv)$ is an invariant
    subspace of $\oiP_\oiM\oiM^\inv\oiA$ which we excluded for readability reasons. The Lanczos
    relation~\eqref{lem:ritz:lanczos} in this case is $\oiP_\oiM\oiM^\inv\oiA
    \vtV_n = \vtV_n \ofT_n$ which does not change the result.
\end{rmk}

\begin{rmk}
    \label{rmk:harmonic}
    Instead of using Ritz vectors for deflation, alternative approximations to
    eigenvectors are possible. An obvious choice are harmonic Ritz pairs
    $(\vw,\mu)\in\vsS\times\C$ such that
    \begin{equation}
      \label{eq:harmonic}
      \oiB\vw - \mu\vw \perp_{\ipdots} \oiB\vsS,
    \end{equation}
    see~\cite{WanSP07,MorZ98,PaiPV95}. However, in numerical experiments no significant
    difference between regular and harmonic Ritz pairs could be observed, see
    remark~\ref{rmk:harmonic_exp} in section~\ref{sec:nls:exp}.
\end{rmk}

Lemma~\ref{lem:ritz} shows how a Lanczos relation for the operator
$\oiP_\oiM\oiM^\inv\oiA$ (that can be generated implicitly in the
deflated and preconditioned MINRES algorithm, cf.\ end of
section~\ref{sec:minres:defl}) can be used to obtain Ritz pairs of the
``undeflated'' operator $\oiM^\inv\oiA$.  An algorithm for the solution of the
sequence of linear systems~\eqref{eq:seq} as described in the beginning of this
subsection is given in algorithm~\ref{alg:seq}. In addition to the Ritz
vectors, this algorithm can include auxiliary deflation vectors $\vtY^\idx$.

\begin{algorithm}
    \begin{algorithmic}[1]
        \Require For $k\in\{1,\ldots,M\}$ we have:
        \begin{itemize}
            \item $\oiM_\idx\in\vsL(\vsH)$ is $\ipdots[\vsH]$-self-adjoint and
                positive-definite.
                \Comment preconditioner
            \item $\oiA_\idx\in\vsL(\vsH)$ is $\ipdots[\vsH]$-self-adjoint.
                \Comment operator
            \item $\vb^\idx,\vx_0^\idx\in\vsH$.
                \Comment right hand side and initial guess
            \item $\vtY^\idx\in\vsH^{l_\idx}$ for $l_\idx\in\N_0$.
                \Comment auxiliary deflation vectors (may be empty)
        \end{itemize}
        \State $\vtW = [~] \in\vsH^0$
            \Comment no Ritz vectors available in first step
        \For{$k=1 \to M$}
            \State $\vtU =$ orthonormal basis of
                $\spn[\vtW,\vtY^\idx]$ with respect to
                $\ipdots[\oiM_\idx]$. \label{alg:seq:orth}
            \State $\vtC= \oiA_\idx \vtU$,
                $\ofE= \ip[\vsH]{\vtU}{\vtC}$
                \Comment $\oiP^*$ as in lemma~\ref{lem:deflsysprop}
                \label{alg:seq:projsetup}
            \State $\vx_0 = \oiP^*\vx_0^\idx +
                \vtU \ofE^\inv \ip[\vsH]{\vtU}{\vb^\idx}$
                \Comment corrected initial guess
            \State $\vx_n^\idx, \vtV_{n+1}, \underline{\ofT}_n, \ofB= \text{MINRES} (\oiA_\idx, \vb_\idx,
                \oiM_\idx^\inv, \oiP^*, \vx_0, \varepsilon)$

                \begin{tabularx}{0.9\textwidth}{|X}
                    MINRES is applied to
                    $\oiM_\idx^\inv\oiA_\idx\vx^\idx = \oiM_\idx^\inv\vb^\idx$ with
                    inner product $\ipdots[\oiM_\idx]$, right preconditioner
                    $\oiP^*$, initial guess $\vx_0$ and tolerance $\varepsilon>0$,
                    cf.\ section~\ref{sec:minres:defl}. Then:
                    \begin{itemize}
                        \item The approximation $\vx_n^\idx$ fulfills
                            $\nrm[\oiM_\idx]{\oiM_\idx^\inv\vb^\idx -
                            \oiM_\idx^\inv\oiA_\idx\vx_n^\idx} \leq \varepsilon$.
                        \item The Lanzcos relation
                            $\oiM_\idx^\inv\oiA_\idx\oiP^*\vtV_n = \vtV_{n+1}
                            \underline{\ofT}_n$ holds.
                        \item $\ofB = \ip[\vsH]{\vtV_n}{\vtC}$ is generated
                            as a byproduct of the application of $\oiP^*$.
                    \end{itemize}
                \end{tabularx}

            \State $\vw_1,\ldots,\vw_m,\mu_1,\ldots,\mu_m,\rho_1,\ldots,\rho_m =
                \text{Ritz}(\vtU,\vtV_{n+1},\underline{\ofT}_n,\ofB,\vtC,\ofE,\oiM_\idx^\inv)$

                \begin{tabularx}{0.9\textwidth}{|X}
                    Ritz($\ldots$) computes the Ritz pairs
                    $(\vw_j,\mu_j)$ for $j\in\{1,\ldots,m\}$ of
                    $\oiM_\idx^\inv\oiA_\idx$ with respect to $\spn[\vtU,\vtV_n]$
                    and the inner product $\ipdots[\oiM_\idx]$, cf.\ lemma~\ref{lem:ritz}.
                    Then:
                    \begin{itemize}
                        \item $\vw_1,\ldots,\vw_m$ form a
                            $\ipdots[\oiM_\idx]$-orthonormal basis of
                            $\spn[\vtU,\vtV_n]$.
                        \item The residual norms
                            $\rho_j=\nrm[\oiM_\idx]{\oiM_\idx^\inv\oiA_\idx\vw_j-\mu_j\vw_j}$
                            are also returned.
                    \end{itemize}
                \end{tabularx}

            \State $\vtW=[\vw_{i_1},\ldots,\vw_{i_d}]$ for pairwise distinct
                $i_1,\ldots,i_d\in\{1,\ldots,m\}$. \label{alg:seq:pick}

                \begin{tabularx}{0.9\textwidth}{|X}
                    Pick $d$ Ritz vectors according to Ritz value
                    and residual norm.
                \end{tabularx}
                \label{alg:pick}

        \EndFor
    \end{algorithmic}
    \caption{Algorithm for the solution of the sequence of linear
    systems~\eqref{eq:seq}.}
    \label{alg:seq}
\end{algorithm}

\paragraph{Selection of Ritz vectors}

In step~\ref{alg:pick} of algorithm~\ref{alg:seq} up to $m$ Ritz vectors can
be chosen for deflation in the next linear system. It is unclear which choice
leads to optimal convergence. The convergence of MINRES is determined by
the spectrum of the operator and the initial residual in an intricate way.
In most applications one can only use rough convergence bounds of the
type~\eqref{eq:minresconv} which form the basis for certain heuristics.
Popular choices include Ritz vectors corresponding to smallest- or
largest-magnitude Ritz values or smallest Ritz residual norms. No general recipe
can be expected.

\paragraph{Notes on the implementation}

We now comment on the implementational side of the determination and
utilization of Ritz pairs while solving a sequence of linear systems (cf.\
algorithm~\ref{alg:seq}). The solution of a single linear system with the
deflated and preconditioned MINRES method was discussed in
section~\ref{sec:minres:defl}.  Although the MINRES method is based on short
recurrences due to the underlying Lanczos algorithm -- and thus only needs
storage for a few vectors -- we still have to store the full Lanczos basis
$\vtV_{n+1}$ for the determination of Ritz vectors and the Lanczos matrix
$\underline{\ofT}_n\in\R^{n+1,n}$. The storage requirements of the tridiagonal
Lanczos matrix are negligible while storing all Lanczos vectors may be costly.
As customary for GMRES, this difficulty can be overcome by restarting the
  MINRES method after a fixed number of iterations.
  This could be added trivially to
  algorithm~\ref{alg:seq} as well by iterating lines~\ref{alg:seq:orth} to
  \ref{alg:pick} with a fixed maximum number of MINRES iterations for the same
  linear system and the last iterate as initial guess. In this case, the number
  $n$ is interpreted not as the total number of MINRES iterations but as
  the number of MINRES iterations in a restart phase.
\rev{%
As an alternative to restarting, Wang et al.~\cite{WanSP07} suggest to
compute the Ritz vectors in cycles of fixed length $s$. At the end of each
cycle, new Ritz vectors are computed from the previous Ritz vectors and the $s$
Lanczos vectors from the current cycle. All but the last two Lanczos vectors are
then dropped since they are not required for continuing the MINRES iteration.
Therefore, the method in~\cite{WanSP07} is able to maintain global optimality of
the approximate solution with respect to the entire Krylov subspace (in exact
arithmetic), which may lead to faster convergence compared to restarted methods.
Note that a revised RMINRES implementation with performance optimizations has
been published in~\cite{MelSPS10}.
Both restarting and cycling thus provide a way to limit the memory requirements.
However, the quality of computed Ritz vectors and thus the performance as
recycling vectors typically deteriorates.
}

In the experiments in this manuscript, neither restarting nor cycling is
necessary since the preconditioner limits the
number of iterations sufficiently (cf.\ section~\ref{sec:nls:exp}). Deflation
can then be used to further improve convergence by directly addressing parts of
the preconditioned operator's spectrum.  An annotated version of the algorithm
can be found in algorithm~\ref{alg:seq}.
Note that the inner product matrix $\ofB$
  is computed implicitly row-wise in each iteration of MINRES by the
  application of $\oiP^*$ to the last Lanczos vector $\vv_n$ because this
involves the computation of $\ip{\oiA\vtU}{\vv_n}=\ofB^\htp\ve_n$.

\paragraph{Overall computational cost}
An overview of the computational cost of one iteration of
algorithm~\ref{alg:seq} is given in table~\ref{table:minres-cost}.
The computation of one iteration of algorithm~\ref{alg:seq} with $n$ MINRES
steps and $d$ deflation vectors involves $n+d+1$ applications of the
preconditioner $\oiM^{-1}$ and the operator $\oiA$. These steps are typically
very costly and dominate the overall computation time.  This is true for all
variants of recycling Krylov subspace methods.  With this in mind, we would
like to take a closer look at the cost induced by the other elements of the
algorithm.  If the inner products are assumed Euclidean, their computation
accounts for a total of $2N\times(d^2+nd+3d+2n)$~FLOPs.  If the selection strategy
of Ritz vectors for recycling requires knowledge of the respective Ritz
residuals, an additional $2N\times d^2$~FLOPs must be invested. The vector
updates require $2N\times(3/2d^2 + 2nd + 5/2 d + 7n)$~FLOPs, so in total,
without computation of Ritz residuals, $2N\times(5/2d^2 + 3nd + 11/2d +
9n)$~FLOPs are required for one iteration of algorithm~\ref{alg:seq} in
addition to the operator applications.

\rev{%
Comparing the computational cost of the presented method with restarted or
cycled methods is hardly possible. If the cycle length $s$ in~\cite{WanSP07} equals the overall
number of iterations $n$, that method requires $2N\times(6d^2 +
3nd + 3d + 2)$~FLOPs for updating the recycling space. In practice, the methods
show a different convergence behavior because $s\ll n$ and the involved
projections differ, cf.\ section~\ref{sec:minres:defl}.
}

Note that the orthonormalization in line~\ref{alg:seq:orth} is redundant in
exact arithmetic if only Ritz vectors are used and the preconditioner does not
change.
Further note that the orthogonalization requires the application of the
operator $\oiM$, i.e., the inverse of the preconditioner. This operator is not
known in certain cases; e.g., with the application of only a few cycles of an
(algebraic) multigrid preconditioner. Orthogonalizing the columns of $\vtU$
with an inaccurate approximation of $\oiM$ (e.g., the original operator $\oiB$)
will then make the columns of $\vtU$ formally orthonormal with respect to a
different inner product. This may lead to wrong results in the Ritz value
computation. A workaround in the popular case of (algebraic) multigrid
preconditioners is to use so many cycles that $\oiM\approx\oiB$ is fulfilled
with high accuracy. However, this typically leads to a substantial
  increase in computational cost and, depending on the application, may defeat
the original purpose of speeding up the Krylov convergence by recycling.

Similarly, round-off errors may lead to a loss of
  orthogonality in the Lanczos vectors and thus to inaccuracies in the computed
Ritz pairs. Details on this are given in remark~\ref{rem:loss}.

\begin{table}[bt]
    \centering
    \caption{Computational cost for one iteration of algorithm~\ref{alg:seq}
    (lines~\ref{alg:seq:orth}--\ref{alg:seq:pick}) with $n$ MINRES iterations
    and $d$ deflation vectors. The number of computed Ritz vectors also is $d$.
    Operations that do not depend on the dimension $N\DEF \dim \vsH$ are
    neglected.}
    \begin{tabular}{l>{\centering\arraybackslash}m{5ex}>{\centering\arraybackslash}m{6ex}>{\centering\arraybackslash}m{4ex}cc}
        \toprule
        & \multicolumn{3}{c}{Applications of}
                                            & Inner      & Vector     \\
        & $\oiA$ & $\oiM^\inv$ & $\oiM$     & products   & updates    \\
        \midrule
        Orthogonalization
        & --     & --          & $d$        & $d(d+1)/2$ & $d(d+1)/2$ \\
        Setup of $\oiP^*$ and $\vx_0$
        & $d$    & --          & --         & $d(d+3)/2$ & $d$       \\
        $n$ MINRES iterations
        & $n+1$    & $n+1$         & --         & $n(d+2)+d$
        & $n(d+7)+d$   \\
        Comp.\ of Ritz vectors
        & --     & $d$         & --         & --      & $d(d+n)$   \\
        (Comp.\ of Ritz res.\ norms)
        & --     & --         & --         & $d^2$      & --\\
        \bottomrule
    \end{tabular}
    \label{table:minres-cost}
\end{table}

\section{Application to nonlinear Schr\"odinger problems}
\label{sec:nls}
Given an open domain $\Omega\subseteq\R^{\{2,3\}}$,
nonlinear Schr\"odinger operators
are typically derived from the minimization
of the Gibbs energy in a corresponding physical system and
have the form
\begin{equation}\label{eq:nls}
\begin{split}
&\Sc: X\to Y,\\
&\Sc(\psi) \dfn (\K + V + g|\psi|^2)\psi \quad \text{in } \Omega
\end{split}
\end{equation}
with $X\subseteq L^2(\Omega)$ being the natural energy space of the problem,
and $Y\subseteq L^2(\Omega)$. If the domain is bounded, the space $X$ may
incorporate boundary conditions appropriate to the physical setting.
The linear operator $\K$ is assumed to be self-adjoint and positive-semidefinite
with respect to $\ipdots[L^2(\Omega)]$,
$V:\Omega\to\R$ is a given scalar potential, and $g>0$ is a given nonlinearity parameter.
A state $\hat{\psi}:\Omega\to\C$ is called a solution of the nonlinear
Schr\"odinger equation if
\begin{equation}\label{eq:schroed}
\Sc(\hat{\psi}) = 0.
\end{equation}
Generally, one is only interested in nontrivial solutions $\hat{\psi}\not\equiv 0$.
The function $\hat{\psi}$ is often referred to as \emph{order parameter} and
its magnitude $|\hat{\psi}|^2$ typically describes a particle density or, more generally, a probability distribution.
Note that, because of
\begin{equation}\label{eq:symm}
\Sc(\exp\{\ii\chi\}\psi) = \exp\{\ii\chi\}\Sc(\psi)
\end{equation}
one solution $\hat{\psi}\in X$ is really just a representative
of the physically equivalent solutions $\{\exp\{\ii\chi\}\hat{\psi}: \chi\in\R\}$.

For the numerical solution of~(\ref{eq:schroed}), Newton's method is
popular for its fast convergence in a neighborhood of a solution:
Given a good-enough initial guess $\psi_0$,
the Newton process generates a sequence of iterates $\psi_k$ which
converges superlinearly towards a solution $\hat{\psi}$ of~(\ref{eq:schroed}).
In each step $k$ of Newton's method, a linear system with the Jacobian
\begin{equation}\label{eq:jacobian}
\begin{split}
&\J(\psi): X \to Y,\\
&\J(\psi)\phi
\dfn \left(\K  + V + 2g|\psi|^2 \right) \phi + g \psi^2 \conj{\phi}.
\end{split}
\end{equation}
of  $\Sc$ at $\psi_k$ needs to be solved.
Despite the fact that states $\psi$ are generally complex-valued,
$\J(\psi)$ is linear only if $X$ and $Y$ are defined as vector spaces over the
field $\R$ with the corresponding inner product
\begin{equation}\label{eq:real-inner-product}
\left\langle\cdot,\cdot\right\rangle_{\R}\dfn \Re\left\langle\cdot,\cdot\right\rangle_{L^2(\Omega)}.
\end{equation}
This matches the notion that the specific complex argument of the order
parameter is of no physical relevancy since $|r\exp\{\ii\alpha\} \psi|^2 =
|r\psi|^2$ for all $r,\alpha\in\R$, $\psi\in X$ (compare with~(\ref{eq:symm})).

Moreover, the work in~\cite{SAV:2012:NBS} gives a representation of adjoints of
operators of the form~\eqref{eq:jacobian}, from which one can derive
\begin{cor}\label{corollary:j-self-adjoint}
For any given $\psi\in Y$, the Jacobian operator $\J(\psi)$~\eqref{eq:jacobian} is self-adjoint with
respect to the inner product~\eqref{eq:real-inner-product}.
\end{cor}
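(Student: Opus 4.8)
The plan is to prove Corollary~\ref{corollary:j-self-adjoint} by using the natural splitting of $\J(\psi)$ into a $\C$-linear and a $\C$-antilinear part and checking $\ipdots[\R]$-self-adjointness of each summand separately; since self-adjointness is preserved under addition, this suffices. Write $\J(\psi)\phi = \oiL\phi + \oiB\phi$ with $\oiL\phi \dfn (\K + V + 2g|\psi|^2)\phi$ and $\oiB\phi \dfn g\psi^2\conj{\phi}$. The operator $\K$ is $\ipdots[L^2(\Omega)]$-self-adjoint by assumption, and multiplication by the real-valued function $V + 2g|\psi|^2$ is trivially $\ipdots[L^2(\Omega)]$-self-adjoint, so $\oiL$ is $\ipdots[L^2(\Omega)]$-self-adjoint as well; taking real parts of the identity $\ip[L^2(\Omega)]{\oiL\phi}{\eta} = \ip[L^2(\Omega)]{\phi}{\oiL\eta}$ immediately gives $\ip[\R]{\oiL\phi}{\eta} = \ip[\R]{\phi}{\oiL\eta}$.

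For the antilinear part $\oiB$ the complex $L^2$-inner product is only conjugate-symmetric, so I would verify the claim by a direct computation. Using that $\ipdots[L^2(\Omega)]$ is linear in the first argument and $g\in\R$,
\begin{align*}
    \ip[L^2(\Omega)]{\oiB\phi}{\eta} &= \int_\Omega g\psi^2\conj{\phi}\,\conj{\eta}\,\mathrm{d}x,\\
    \ip[L^2(\Omega)]{\phi}{\oiB\eta} &= \int_\Omega \phi\,\conj{g\psi^2\conj{\eta}}\,\mathrm{d}x
        = \int_\Omega g\,\conj{\psi}^{\,2}\phi\,\eta\,\mathrm{d}x
        = \conj{\int_\Omega g\psi^2\conj{\phi}\,\conj{\eta}\,\mathrm{d}x}.
\end{align*}
Taking real parts and invoking the elementary identity $\Re z = \Re\conj{z}$ then yields $\ip[\R]{\oiB\phi}{\eta} = \ip[\R]{\phi}{\oiB\eta}$, and adding the two contributions gives $\ip[\R]{\J(\psi)\phi}{\eta} = \ip[\R]{\phi}{\J(\psi)\eta}$, which is the assertion. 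Alternatively, one can simply specialize the adjoint formula for operators of the form~\eqref{eq:jacobian} established in~\cite{SAV:2012:NBS} and observe that the resulting adjoint equals $\J(\psi)$ itself.

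The step I expect to need the most care is precisely the antilinear summand $\oiB$: it is \emph{not} $\ipdots[L^2(\Omega)]$-self-adjoint -- in fact $\ip[L^2(\Omega)]{\oiB\phi}{\eta} = \conj{\ip[L^2(\Omega)]{\phi}{\oiB\eta}}$ -- and self-adjointness is recovered only after passing to the real inner product~\eqref{eq:real-inner-product}. This is also the structural reason why $X$ and $Y$ must be regarded as $\R$-vector spaces for $\J(\psi)$ to be linear at all, as noted around~\eqref{eq:real-inner-product}. The remaining issue, namely pinning down the common domain for the unbounded operator $\K$ on which the identity $\ip[\R]{\J(\psi)\phi}{\eta} = \ip[\R]{\phi}{\J(\psi)\eta}$ holds, is inherited from the standing assumptions on $X$, $Y$, and $\K$ and needs no separate argument.
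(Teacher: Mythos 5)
Your proof is correct, and it takes a more self-contained route than the paper, which does not spell out an argument at all: the paper simply remarks that the corollary follows from the representation of adjoints for operators of the form~\eqref{eq:jacobian} established in~\cite{SAV:2012:NBS}. Your decomposition $\J(\psi)=\oiL+\oiB$ into the $\C$-linear part $\oiL\phi=(\K+V+2g|\psi|^2)\phi$ and the $\C$-antilinear part $\oiB\phi=g\psi^2\conj{\phi}$ is the natural one, and the two cases are handled correctly. For $\oiL$ you use that $\K$ is $\ipdots[L^2(\Omega)]$-self-adjoint and that multiplication by the real function $V+2g|\psi|^2$ is as well, then pass to real parts; for $\oiB$ you correctly observe that only $\ip[L^2(\Omega)]{\oiB\phi}{\eta}=\conj{\ip[L^2(\Omega)]{\phi}{\oiB\eta}}$ holds, so self-adjointness emerges only after taking real parts, i.e., only in the inner product~\eqref{eq:real-inner-product}. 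You make explicit the structural point the paper leaves implicit, namely that the passage to the $\R$-vector space structure and the real inner product is exactly what turns the conjugate-symmetry of the antilinear summand into genuine symmetry. The trade-off is clear: your direct computation gives a complete elementary verification in a few lines, whereas the paper buys brevity by outsourcing the computation to~\cite{SAV:2012:NBS}; the alternative you mention at the end of your proposal is in fact the route the paper takes.
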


An important consequence of the independence of states of the complex
argument~\eqref{eq:symm}
is the fact that solutions of equation~(\ref{eq:nls}) form a smooth manifold in $X$. Therefore, the linearization~\eqref{eq:jacobian} in solutions
always has a nontrivial kernel.
Indeed, for any $\psi\in X$
\begin{equation}\label{eq:kernel-nontrivial}
\J(\psi)(\ii\psi)
= \left(\K  + V + 2g|\psi|^2 \right) (\ii\psi) - g \ii \psi^2 \conj{\psi}
= \ii \left(\K  + V + g|\psi|^2 \right) \psi
= \ii \Sc(\psi),
\end{equation}
so for nontrivial solutions $\hat{\psi}\in X$, $\psi\not\equiv 0$, $\Sc(\hat{\psi})=0$,
the dimensionality of the kernel of $\J(\psi)$ is at least 1.

Besides the fact that there is always a zero eigenvalue in a solution
$\hat{\psi}$ and that all eigenvalues are real, not much more can be said about
the spectrum; in general, $\mathcal{J}(\psi)$ is indefinite. The definiteness
depends entirely on the state $\psi$; if $\psi$ is a solution to
\eqref{eq:nls}, it is said to be stable or unstable depending whether or not
$\mathcal{J}(\psi)$ has negative eigenvalues. Typically, solutions with low
Gibbs energies tend to be stable whereas highly energetic solutions tend to be
unstable.  For physical systems in practice, it is uncommon to see more than
ten negative eigenvalues for a given solution state.

\subsection{Principal problems for the numerical solution}
\label{sec:nls:principal}

While the numerical solution of nonlinear systems itself is challenging,
the presence of a singularity in a solution as in \eqref{eq:kernel-nontrivial}
adds two major obstacles for using Newton's method.
\begin{itemize}
  \item Newton's method is guaranteed to converge towards a solution
    $\hat{\psi}$
$Q$-su\-per\-lin\-e\-ar\-ly in the area of attraction only
if $\hat{\psi}$ is nondegenerate, i.e., the Jacobian in
$\hat{\psi}$ is regular.
If the Jacobian operator does have a singularity,
only linear convergence can be guaranteed.
\item While no linear system has to be solved with the exactly singular $\J(\hat{\psi})$,
the Jacobian operator close the solution $\J(\hat{\psi}+\delta \psi)$ will have
at least one eigenvalue of small magnitude,
i.e., the Jacobian system becomes ill-conditioned when approaching a solution.
\end{itemize}

Several approaches have been suggested to deal with this situation,
for a concise survey of the matter, see~\cite{griewank1985solving}.
One of the most used strategies is \emph{bordering} which
suggests extending the original problem $\Sc(\psi)=0$ by a
so-called \emph{phase condition} to pin down the redundancy~\cite{champneys2007numerical},
\begin{equation}\label{eq:bord}
0 = \tilde{\Sc}(\psi,\lambda) \dfn
\begin{pmatrix}
\Sc(\psi)+\lambda y\\
p(x)
\end{pmatrix}.
\end{equation}
If $y$ and $p(\cdot)$ are chosen according to some well-understood
criteria~\cite{keller1983bordering}, the Jacobian systems can be shown to be
well-conditioned throughout the Newton process.  Moreover, the bordering can be
chosen in such a way that the linearization of the extended system is
self-adjoint in the extended scalar product if the linearization of the
original problem is also self-adjoint.  This method has been applied to the
specialization of the Ginzburg--Landau equations~\eqref{eq:GL}
before~\cite{SAV:2012:NBS}, and naturally generalizes to nonlinear
Schr\"odinger equations in the same way.  One major disadvantage of the
bordering approach, however, is that it is not clear how to precondition the
extended system even if a good preconditioner for the original problem is
known.

In the particular case of nonlinear Schr\"odinger equations, the loss of speed
of convergence is less severe than in more general settings.
Note that there would be no slowdown at all if the Newton update $\delta\psi$, given by
\begin{equation}\label{eq:jac-update}
\J(\psi)\delta\psi = - \Sc(\psi),
\end{equation}
was consistently orthogonal to the null space $\ii\hat{\psi}$ close to a solution $\hat{\psi}$.
While this is not generally true, one is at least in the situation that
the Newton update can never be an exact multiple of the direction
of the approximate null space $\ii\psi$. This is because
\[
\J(\psi)(\alpha\ii\psi) = - \Sc(\psi), \quad \alpha\in\R,
\]
together with \eqref{eq:kernel-nontrivial}, is equivalent to
\[
\alpha \ii\Sc(\psi) = - \Sc(\psi)
\]
which can only be fulfilled if $\Sc(\psi)=0$, i.e., if $\psi$ is already a solution.

Consequently, loss of $Q$-superlinear convergence is hardly ever observed in
numerical experiments.  Figure~\ref{fig:newton-res}, for example, shows the
Newton residual for the two- and three-dimensional test setups, both with the
standard formulation and with the bordering~\eqref{eq:bord} as proposed
in~\cite{SAV:2012:NBS}.  Of course, the Newton iterates follow different
trajectories, but the important thing to note is that in both plain and
bordered formulation, the speed of convergence close the solution is
comparable.

The more severe restriction is in the numerical difficulty of solving the
Jacobian systems in each Newton step due to the increasing ill-posedness of the
problem as described above.
However, although the Jacobian has a nontrivial near-null space
close to a solution, the problem is well-defined at all times.
This is because, by self-adjointness, its left near-null space coincides with
the right near-null space, $\spn\{\ii\hat{\psi}\}$, and
the right-hand-side in \eqref{eq:jac-update},
$-\Sc(\psi)$, is orthogonal
to $\ii\psi$ for any $\psi$:
\begin{multline}\label{eq:s0}
\left\langle \ii\psi, S(\psi) \right\rangle_{\R}
= \left\langle \ii\psi, \K(\psi) \right\rangle_{\R}
+ \left\langle \ii\psi, V(\psi) \right\rangle_{\R}
+ \left\langle \ii\psi, g|\psi|^2\psi \right\rangle_{\R}\\
= \Re\left(\ii \langle\psi,\K\psi\rangle_2\right)
+ \Re\left(\ii \langle\psi,V\psi\rangle_2\right)
+ \Re\left(g\ii\left\langle|\psi|^2,|\psi|^2 \right\rangle_2\right)
= 0.
\end{multline}
The numerical problem is hence caused only by the fact that one
eigenvalue approaches the origin as the Newton iterates
approach a solution. The authors propose to handle this difficulty on the
level of the linear solves for the Newton updates
using the deflation framework developed in section~\ref{sec:minres}.

\subsection{The Ginzburg--Landau equation}
\label{sec:nls:gl}

One important instance of nonlinear Schr\"o\-ding\-er equations~\eqref{eq:nls}
is the Ginzburg--Landau equation that
models supercurrent density for extreme-type-II superconductors.
Given an open, bounded domain $\Omega\subseteq\R^{\{2,3\}}$,
the equations are
\begin{equation}\label{eq:GL}
0 =
\begin{cases}
\K\psi - \psi(1-|\psi|^2) \quad\text{in }\Omega,\\[2ex]
\n\cdot(-\ii\bn-\A)\psi\quad\text{on }\partial\Omega.
\end{cases}
\end{equation}
The operator $\K$ is defined as
\begin{equation}\label{eq:kinetic-energy-operator}
\begin{split}
&\K\colon X \to Y,\\
&\K\phi \dfn (-\ii\bn-\A)^2 \phi.
\end{split}
\end{equation}
with the magnetic vector potential $\A\in
H_{\R^d}^2(\Omega)$~\cite{DGP:1993:MAP}.  The operator $\K$ describes the
energy of a charged particle under the influence of a magnetic field
$\B=\bn\times\A$, and can be shown to be Hermitian and positive-semidefinite;
the eigenvalue $0$ is assumed only for $\A\equiv\0$~\cite{SV:2012:OLS}.
Solutions $\hat{\psi}$ of \eqref{eq:GL} describe the density $|\hat{\psi}|^2$
of electric charge carriers and fulfill $0\le|\hat{\psi}|^2\le 1$
pointwise~\cite{DGP:1993:MAP}.  For two-dimensional domains, they typically
exhibit isolated zeros referred to as \emph{vortices}; in three dimensions,
lines of zeros are the typical solution pattern (see
figure~\ref{fig:solutions}).

\paragraph{Discretization}
For the numerical experiments in this paper, a finite-volume-type
discretization is employed~\cite{du2005numerical,SV:2012:OLS}.
Let $\Omega^{(h)}$ be a discretization of $\Omega$ with a triangulation
$\{T_i\}_{i=1}^m$, $\bigcup_{i=1}^m T_i=\Omega^{(h)}$,
and the node-centered Voronoi tessellation
$\{\Omega_k\}_{k=1}^n$, $\bigcup_{k=1}^n \Omega_k=\Omega^{(h)}$.
Let further $\ee_{i,j}$ denote the edge between two nodes $i$, $j$.
The discretized problem is then to find $\psi^{(h)}\in \C^n$ such that
\begin{equation}\label{eq:discr-ginla}
\forall k\in\{1,\dots,n\}:
\quad 0 = \left(S^{(h)}\psi^{(h)}\right)_k \dfn \left(K^{(h)}\psi^{(h)}\right)_k - \psi^{(h)}_k\left(1-|\psi^{(h)}_k|^2\right),
\end{equation}
where the discrete kinetic energy operator $K^{(h)}$ is defined by
\begin{multline}\label{eq:discr-kin-energy}
\forall\phi^{(h)},\psi^{(h)}\in\C^n: \quad \left\langle K^{(h)}\psi^{(h)}, \phi^{(h)}\right\rangle
=\\
\sum_{\text{edges }\ee_{i,j}} \alpha_{i,j}
\left[
\left(\psi^{(h)}_i - U_{i,j}\psi^{(h)}_j\right) \conj{\phi}^{(h)}_i
+
\left(\psi^{(h)}_j - \conj{U_{i,j}}\psi^{(h)}_i\right) \conj{\phi}^{(h)}_j
\right]
\end{multline}
with the discrete inner product
\begin{equation*}
\left\langle\psi^{(h)}, \phi^{(h)}\right\rangle \dfn \sum_{k=1}^n |\Omega_k|\,\psi^{(h)}_k \conj{\phi}^{(h)}_k
\end{equation*}
and edge coefficients $\alpha_{i,j}\in\R$~\cite{SV:2012:OLS}.
The magnetic vector potential $\A$ is incorporated in the so-called \emph{link variables},
\[
U_{i,j} \dfn \exp\left(-\ii\igralnl{\x_j}{\x_i}{\ee_{i,j}\cdot\A(\w)}{\w}\right).
\]
along the edges $\ee_{i,j}$ of the triangulation.

\begin{rmk}\label{rmk:dk}
In matrix form, the operator $K^{(h)}$ is
represented as a product $K^{(h)}=D^{-1}\widehat{K}$ of the diagonal
matrix $D^{-1}$, $D_{i,i}=|\Omega_i|$, and a Hermitian matrix $\widehat{K}$.
\end{rmk}

This discretization preserves a number of invariants of the problem,
e.g., gauge invariance of the type $\tilde{\psi}\dfn\exp\{\ii\chi\}\psi$, $\tilde{\A}\dfn\A+\nabla\chi$
with a given $\chi\in C^1(\Omega)$.
Moreover, the discretized energy operator $K^{(h)}$ is Hermitian and
positive-definite~\cite{SV:2012:OLS}.
Analogous to \eqref{eq:jacobian}, the discretized Jacobian operator at $\psi^{(h)}$ is defined by
\[
  \begin{split}
    &J^{(h)}(\psi^{(h)}): \C^n \to \C^n,\\
    &J^{(h)}(\psi^{(h)}) \phi^{(h)}
    \dfn \left(K^{(h)} - 1 + 2|\psi^{(h)}|^2 \right) \phi^{(h)} + (\psi^{(h)})^2 \conj{\phi^{(h)}}
  \end{split}
\]
where the vector-vector products are interpreted entry-wise.
The discrete Jacobian is self-adjoint with respect to the discrete inner
product
\begin{equation}\label{eq:real-discr-inner}
\left\langle\psi^{(h)}, \phi^{(h)}\right\rangle_\R \dfn \Re\left(\sum_{k=1}^n |\Omega_k|\,\conj{\psi}^{(h)}_k \phi^{(h)}_k\right)
\end{equation}
and the statements~\eqref{eq:kernel-nontrivial}, \eqref{eq:s0} about the null
space carry over from the continuous formulation.

\begin{rmk}[Real-valued formulation]
There is a vector space isomorphism $\alpha:\C^n\to\R^{2n}$ between $\R^{2n}$
and $\C^n$ as vector space over $\R$ given by the basis mapping
\[
\alpha(e_j^{(n)}) = e_j^{(2n)},\quad
\alpha(\ii e_j^{(n)}) = e_{n+j}^{(2n)}.
\]
In particular, note that the dimensionality of $\C^n_\R$ is $2n$.  The
isomorphism $\alpha$ is also isometric with the natural inner product
$\left\langle\cdot,\cdot\right\rangle_{\R}$ of $\C^n_\R$, since for any given
pair $\phi, \psi\in \C^n$ one has
\[
\left\langle
\begin{pmatrix}
\Re\phi\\
\Im\phi\\
\end{pmatrix},
\begin{pmatrix}
\Re\psi\\
\Im\psi\\
\end{pmatrix}
\right\rangle
=
\left\langle
\Re\phi, \Re\psi
\right\rangle
+
\left\langle
\Im\phi, \Im\psi
\right\rangle
=
\left\langle
\phi, \psi
\right\rangle_{\R}.
\]
Moreover, linear operators over $\C^n_\R$ generally have the form
$L\psi = A\psi + B\conj{\psi}$ with some $A, B\in\C^{n\times n}$
and because of
\[
L w = \lambda w \quad\Leftrightarrow\quad (\alpha L \alpha^{-1}) \alpha w = \lambda \alpha w,
\]
the eigenvalues also exactly convey to its
real-valued image $\alpha L \alpha^{-1}$.

This equivalence can be relevant in practice as quite commonly, the original
com\-plex-val\-ued problem in $\C^n$ is implemented in terms $\R^{2n}$. Using
the natural inner product in this space will yield the expected results without
having to take particular care of the inner product.
\end{rmk}

\subsection{Numerical experiments}
\label{sec:nls:exp}

The numerical experiments are performed with the following two setups.
\begin{setup}[2D]\label{ex:2d}
The circle $\Omega_{\textup{2D}}\dfn\{x\in\R^2: \|\x\|_2 < 5\}$ and the
magnetic vector potential $\A(\x)\dfn \bm{m}\times(\x-\x_0) / \|\x-\x_0\|^3$
with $\bm{m}\dfn(0,0,1)^\tp$ and $\x_0\dfn(0,0,5)^\tp$, corresponding to the
magnetic field generated by a dipole at $\x_0$ with orientation $\bm{m}$.  A
Delaunay triangulation for this domain with 3299 nodes was created using
\emph{Triangle}~\cite{shewchuk2002delaunay}.  With the
discrete equivalent of $\psi_0(\x)=\cos(\pi y)$ as initial guess, the Newton
process converges after 27 iterations with a residual of less than $10^{-10}$
in the discretized norm (see figure~\ref{fig:newton-res}).  The final state is
illustrated in figure~\ref{fig:solutions}.
\end{setup}

\begin{setup}[3D]\label{ex:3d}
The three-dimensional L-shape
\[
\Omega_{\text{3D}}\dfn\{\x\in\R^3: \|\x\|_{\infty}<5\} \backslash \R^3_+,
\]
discretized using Gmsh~\cite{geuzaine2009gmsh} with 72166 points.
The chosen magnetic vector field is constant $\B_{\text{3D}}(\x)\dfn 3^{-1/2}(1,1,1)^\tp$,
represented by
the vector potential $\A_{\text{3D}}(\x)\dfn\frac{1}{2} \B_{\text{3D}}\times\x$.
With the discrete equivalent of $\psi_0(\x)=1$,
the Newton process converges after 22 iterations with a residual of less than $10^{-10}$
in the discretized norm (see figure~\ref{fig:newton-res}).
The final state is illustrated in figure~\ref{fig:solutions}.
\end{setup}

All experimental results presented in this section can be reproduced from the
data published with the free and open source Python packages
\emph{KryPy}~\cite{krypy} and \emph{PyNosh}~\cite{pynosh}. \emph{KryPy} contains
an implementation of deflated Krylov subspace methods; e.g.,
algorithm~\ref{alg:seq}. \emph{PyNosh} provides solvers for nonlinear
Schr\"odinger equations including the above test cases.

\begin{figure}
  \centering
  \setlength\figurewidth{0.39\textwidth}
  \setlength\figureheight{0.75\figurewidth}
%
%
%
%
\begin{tikzpicture}

\begin{semilogyaxis}[
width=\figurewidth,
height=\figureheight,
scale only axis,
xlabel={Newton step},
ylabel={$\|S^{(h)}\|$},
xmin=0, xmax=30,
ymin=1e-12, ymax=1e5
]
\addplot [black]
coordinates {
(0,246.054961360687) (1,47.0170902917901) (2,846.126811188113) (3,249.265576583671) (4,72.7481718084762) (5,20.6971815129195) (6,6.18017769531597) (7,300.367568478137) (8,88.0099886993605) (9,104.56519358744) (10,30.378603812196) (11,8.52030371268952) (12,2.1532891821304) (13,0.479075778905525) (14,0.188444493337976) (15,0.134920077797571) (16,0.0724421525995564) (17,0.122825241906439) (18,0.00834334421669152) (19,0.259502590666373) (20,0.0088039249163069) (21,0.219097960924367) (22,0.00285003768055603) (23,0.00873060278107469) (24,5.79149567245123e-05) (25,9.32292091887457e-06) (26,6.35225665849403e-10) (27,8.8097599286617e-12)
};
\label{pgfplots:nobord}

\addplot [gray,dashed]
coordinates {
(0,246.054961360687) (1,47.0363071589207) (2,905.382187657701) (3,266.825565569082) (4,77.9510110731461) (5,22.2328537741631) (6,6.3405343989359) (7,14083.509536694) (8,4179.84627297465) (9,1236.36048508937) (10,364.855538937414) (11,107.040841229296) (12,30.9262560562714) (13,8.53436276515465) (14,2.08858380237634) (15,0.345313456797847) (16,0.198534466887245) (17,0.0937602991723843) (18,0.0661409046087194) (19,0.0330972782114333) (20,0.00853615234306089) (21,0.0416504334408859) (22,0.00106647100919489) (23,0.0465452364143513) (24,0.000324296348396482) (25,0.00155332050269323) (26,6.06631356955556e-06) (27,7.45160190411373e-07) (28,1.7898747645773e-12)
};
\label{pgfplots:bord}

\end{semilogyaxis}

\end{tikzpicture}
  \hfill
%
%
%
%
\begin{tikzpicture}

\begin{semilogyaxis}[
width=\figurewidth,
height=\figureheight,
scale only axis,
xlabel={Newton step},
xmin=0, xmax=30,
ymin=1e-12, ymax=1e5,
yticklabels=\empty
]
\addplot [black]
coordinates {
(0,186.691370822136) (1,29.843235045975) (2,17.7215403704804) (3,4.97174632199204) (4,5.28557578902078) (5,55.7307194231444) (6,16.2156924400106) (7,11150.5803737253) (8,3302.4146915914) (9,977.49165000685) (10,288.925946815577) (11,98.9933294794037) (12,28.8148296963305) (13,8.1699013358768) (14,2.18574399355939) (15,0.528310267421839) (16,0.144556911866983) (17,0.0751682284949973) (18,0.0347743608458282) (19,0.00769880532630812) (20,0.000627393495165509) (21,9.87907949317864e-07) (22,5.87229434570361e-12)
};
\addplot [gray,dashed]
coordinates {
(0,186.69137082213584)
(1,29.843235046452712)
(2,17.718319467926399)
(3,4.9713620334188944)
(4,5.2873501451559823)
(5,37.012365358613849)
(6,10.607217762339619)
(7,3.3750005952962443)
(8,4.1465456249240056)
(9,2.0417399913775873)
(10,5.6404677644523922)
(11,1.9598058437983554)
(12,0.49735934046804881)
(13,0.29687561118817563)
(14,0.34536911576887824)
(16,0.13038209326467493)
(17,0.22281827704358415)
(18,0.039995675898197379)
(19,0.090945520748874562)
(20,0.006801607679073637)
(21,0.006644187879165364)
(22,7.6007178308863896e-05)
(23,7.4842616377167088e-07)
(24,1.5562418068232326e-12)
};

\end{semilogyaxis}

\end{tikzpicture}
  \caption{Newton residual history for the two-dimensional setup~\ref{ex:2d} (left)
  and three-dimensional setup~\ref{ex:3d} (right), each with bordering
  and without
  With initial guesses
  $\psi_0^{\text{2D}}(\x)=\cos(\pi y)$ and $\psi_0^{\text{3D}}(\x)=1$,
  respectively, the Newton process delivered the solutions as
  highlighted in figure~\ref{fig:solutions} in 22 and 27 steps, respectively.}
  \label{fig:newton-res}
  \setlength\figurewidth{0.3\textwidth}
  \subcaptionbox{Cooper-pair density $|\psi|^2$.}{\begin{tikzpicture}
\begin{axis}
[
axis equal,
enlargelimits=false,
scale only axis,
hide axis=true,
width=\figurewidth,
]
\addplot graphics[xmin=-10,xmax=10,ymin=-10,ymax=10]{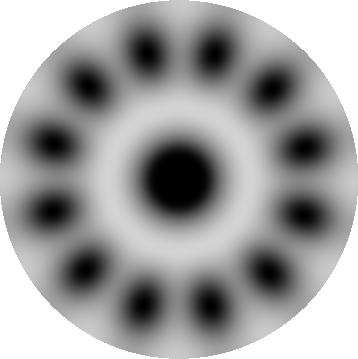};
\end{axis}
\end{tikzpicture}}
  \hspace{0.1\textwidth}
  \subcaptionbox{Cooper-pair density $|\psi|^2$ at the surface of the domain.}{\begin{tikzpicture}
\begin{axis}
[
axis equal,
enlargelimits=false,
scale only axis,
hide axis=true,
width=\figurewidth,
colorbar=true,
point meta min=0,
point meta max=1,
colormap/blackwhite,
colorbar style={ytick={0,0.5,1},yticklabels={$0$,$\frac{1}{2}$,$1$}}
]
\addplot graphics[xmin=-10,xmax=10,ymin=-10,ymax=10]{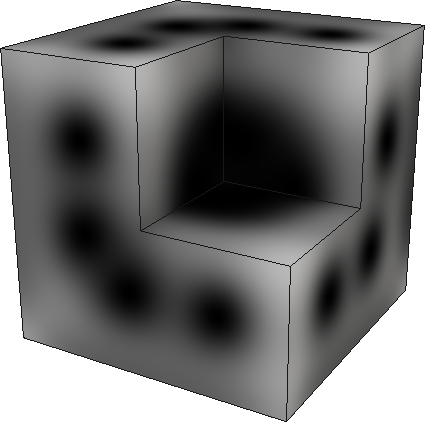};
\end{axis}
\end{tikzpicture}}\\[0.2\figureheight]
  \subcaptionbox{$\arg\psi$.}{\begin{tikzpicture}
\begin{axis}
[
axis equal,
enlargelimits=false,
scale only axis,
hide axis=true,
width=\figurewidth,
]
\addplot graphics[xmin=-10,xmax=10,ymin=-10,ymax=10]{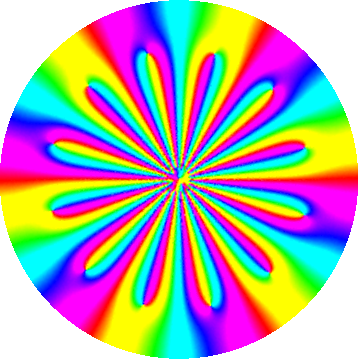};
\end{axis}

\end{tikzpicture}}
  \hspace{0.1\textwidth}
  \subcaptionbox{Isosurface with $|\psi|^2=0.1$ (see (b)), $\arg\psi$ at the back sides of the cube.}{\begin{tikzpicture}
\begin{axis}
[
axis equal,
enlargelimits=false,
scale only axis,
hide axis=true,
width=\figurewidth,
colorbar=true,
point meta min=-3.141592653589793,
point meta max=3.141592653589793,
colormap={mymap}{[1pt] rgb(0pt)=(1,0,0); rgb(10pt)=(1,0.9375,0); rgb(11pt)=(0.96875,1,0); rgb(21pt)=(0.03125,1,0); rgb(22pt)=(0,1,0.0625); rgb(32pt)=(0,1,1); rgb(42pt)=(0,0.0625,1); rgb(43pt)=(0.03125,0,1); rgb(53pt)=(0.96875,0,1); rgb(54pt)=(1,0,0.9375); rgb(63pt)=(1,0,0.09375)},
colorbar style={ytick={-3.141592653589793,0.0,3.141592653589793},yticklabels={$-\pi$,$0$,$\pi$}}
]
\addplot graphics[xmin=-10,xmax=10,ymin=-10,ymax=10]{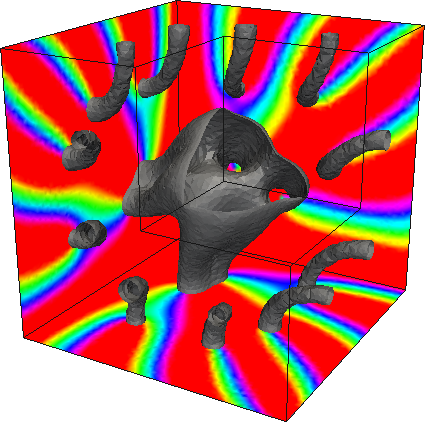};
\end{axis}%
\end{tikzpicture}}
  \caption{Solutions of the test problems as found in the Newton process illustrated in figure~\ref{fig:newton-res}.}
  \label{fig:solutions}
\end{figure}

For both setups, Newton's method was used and the linear
systems~\eqref{eq:jac-update} were solved using MINRES to exploit
self-adjointness of $J^{(h)}$.  Note that it is critical here to use the
natural inner product of the system~\eqref{eq:real-discr-inner}).  All of the
numerical experiments incorporate the preconditioner proposed
in~\cite{SV:2012:OLS} that is shown to bound the number of Krylov iterations
needed to reach a certain relative residual by a constant independent of the
number $n$ of unknowns in the system.

\begin{rmk}
Neither of the above test problems have initial guesses which sit in the cone
of attraction of the solution they eventually converge to. As typical for local
nonlinear solvers, the iterations which do not directly correspond with the
final convergence are sensitive to effects introduced by the discretization or
round-off errors. It will hence be difficult to reproduce precisely the
shown solutions
without exact information about the point coordinates in the discretization
mesh. However, the same general convergence patterns were observed for
  numerous meshes and initial states; the presented solutions shall serve
as examples thereof.
\end{rmk}

\begin{figure}
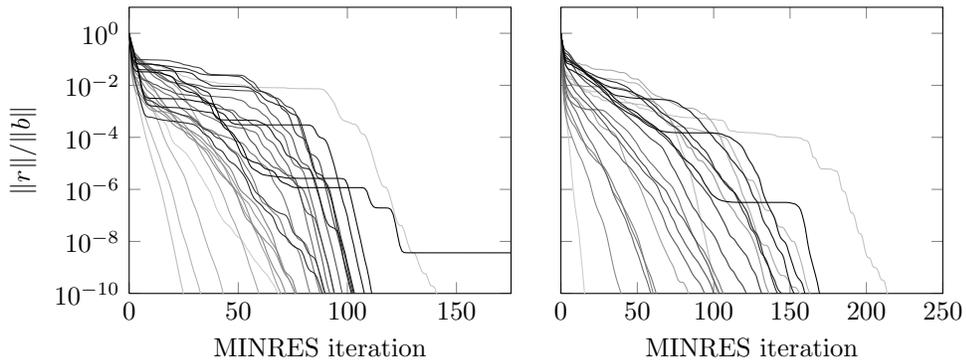
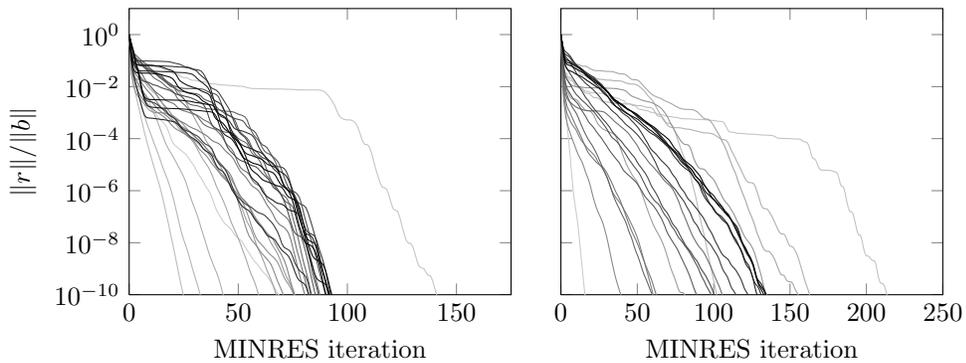
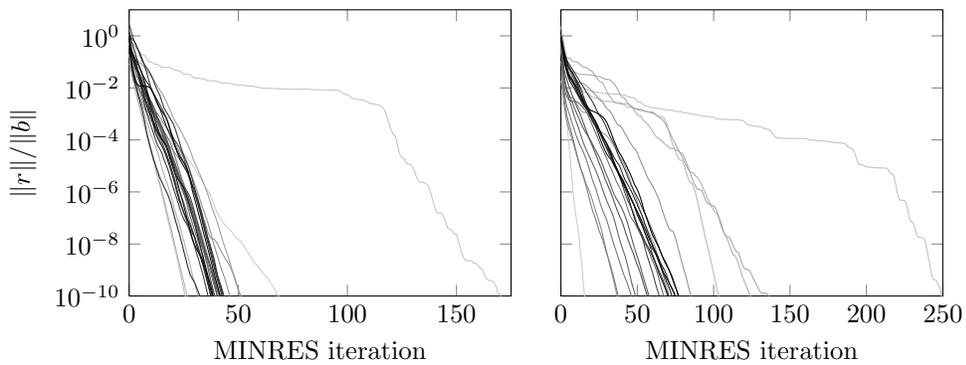

  \centering
  \setlength\figurewidth{0.39\textwidth}
  \setlength\figureheight{0.75\figurewidth}

  \begin{subfigure}[b]{\textwidth}
      \input{figures/2d/newton-vanilla.tex}
      \hfill
      \input{figures/3d/newton-vanilla.tex}
      \caption{Without deflation.}\label{subfig:without}
  \end{subfigure}\\[0.1\figureheight]
  \begin{subfigure}[b]{\textwidth}
      \input{figures/2d/newton-defl00-ix.tex}
      \hfill
      \input{figures/3d/newton-defl00-ix.tex}
      \caption{Deflation of the vector $\ii\psi$.}
      \label{subfig:deflix}
  \end{subfigure}\\[0.1\figureheight]
  \begin{subfigure}[b]{\textwidth}
      \input{figures/2d/newton-defl12.tex}
      \hfill
      \input{figures/3d/newton-defl12.tex}
      \caption{Deflation of 12 Ritz vectors corresponding to the
      Ritz values of smallest magnitude.}
      \label{subfig:defl12}
    \end{subfigure}

    \caption{MINRES convergence histories of all Newton steps for the 2D problem
    (left) and 3D problem (right). The color of the curve corresponds to the
    Newton step: light gray is the first Newton step while black is the last
    Newton step.}
    \label{fig:newton-hist}
\end{figure}

Figure~\ref{fig:newton-hist} shows the relative residuals for all Newton steps
in both the two- and the three-dimensional setup. Note that the residual curves
late in the Newton process (dark gray) exhibit plateaus of stagnation which are
caused by the low-magnitude eigenvalue associated with the near-null space
vector $\ii\hat{\psi}^{(h)}$.

Figure~\ref{subfig:deflix} incorporates the deflation of this vector via
algorithm~\ref{alg:seq} with $\vtY^{(k)} = \ii\psi^{(k,h)}$ where
$\psi^{(k,h)}$ is the discrete Newton approximate in the $k$th step.  The usage
of the preconditioner and the customized inner
product~\eqref{eq:real-discr-inner} is crucial here.  Clearly, the stagnation
effects are remedied and a significantly lower number of iterations is
necessary to reduce the residual norm to $10^{-10}$. While this comes with
extra computational cost per step (cf.\ table~\ref{tab:deflcost}), this cost is
negligible compared to the considerable convergence speedup.

\begin{rmk}
Note that the initial guess $\tilde{x}_0$ is adapted according to
\eqref{eq:x0correct} before the beginning the iteration.  Because of that, the
initial relative residual $\|b-Ax_0\|/\|b-A \tilde{x}_0\|$ cannot generally be
expected to equal $1$ even if $\tilde{x}_0=0$.  In the particular case of
$U=\ii\psi$, however, we have
\[
\vx_0=\oiP^* \til{\vx}_0 +
        \vtU\left\langle\vtU,J(\psi)\vtU\right\rangle_{\R}^\inv \left\langle\vtU,-\Sc(\psi)\right\rangle_{\R}
     = \oiP^* \til{\vx}_0
\]
since $\langle \ii\psi, \Sc(\psi)\rangle=0$~\eqref{eq:s0}, and the initial
relative residual does equal $1$ if $\tilde{x}_0=0$ (cf.\
figure~\ref{subfig:deflix}).  Note that this is not true anymore when more
deflation vectors are added (cf.\ figure~\ref{subfig:defl12}).
\end{rmk}

Towards the end of the Newton process a sequence of very similar linear systems
needs to be solved. We can hence use the deflated MINRES approach described in
algorithm~\ref{alg:seq} where spectral information is extracted from the
previous MINRES iteration and used for deflation in the present process. For
the experiments, those 12 Ritz vectors from the MINRES iteration in Newton step
$k$ which belong to the Ritz values of smallest magnitude were added for
deflation in Newton step $k+1$.  As displayed in figure~\ref{subfig:defl12},
the number of necessary Krylov iterations is further decreased roughly by a
factor of 2.  Note also that in particular, the characteristic plateaus
corresponding to the low-magnitude eigenvalue do no longer occur. This is
particularly interesting since no information about the approximate null space
was explicitly specified, but automatically extracted from previous Newton
steps.

\begin{figure}
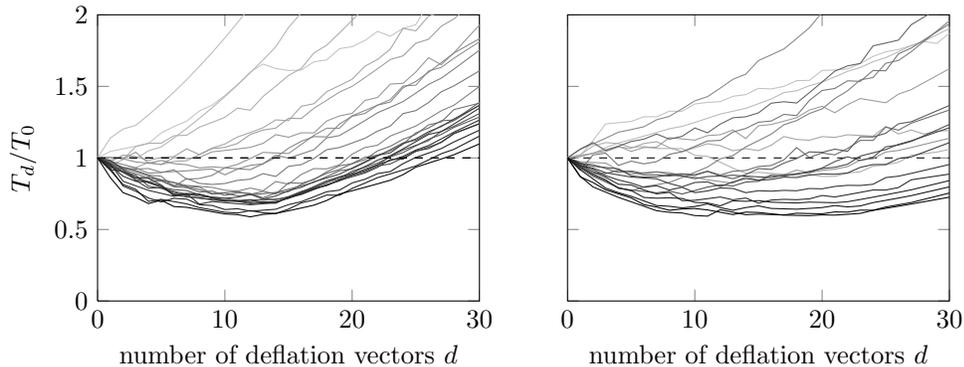

    \centering
    \setlength\figurewidth{0.39\textwidth}
    \setlength\figureheight{0.75\figurewidth}

    \input{figures/2d/newton-efficiency.tex}
    \hfill
    \input{figures/3d/newton-efficiency.tex}

    \caption{Wall-times $T_d$ needed for MINRES solves for the test setups
      (left: 2D; right: 3D) with deflation of those $d$ Ritz vectors from the
      previous Newton step which correspond to the smallest Ritz values.  As in
      the figure~\ref{fig:newton-hist}, light gray lines correspond to steps
      early in the Newton process.  All times are displayed relative to the
      computing time $T_0$ without deflation.  The dashed line at $T_d/T_0=1$
      marks the threshold below which deflation pays off.}
    \label{fig:efficiency}
\end{figure}

As outlined at the end of section~\ref{sec:minres:ritz}, it is a-priori unclear
which choice of Ritz-vectors leads to optimal convergence. Out of the choices
mentioned in section~\ref{sec:minres:ritz}, the smallest-magnitude strategy
performed best in the present application.

Technically, one could go ahead and extract even more Ritz vectors for
deflation in the next step.  However, at some point the extra cost associated
with the extraction of the Ritz vectors (table~\ref{table:minres-cost}) and the
application of the projection operator (table~\ref{tab:deflcost}) will not
justify a further increase of the deflation space.  The efficiency threshold
will be highly dependent on the cost of the preconditioner.  Moreover, it is in
most situations impossible to predict just how the deflation of a particular
set of vectors influences the residual behavior in a Krylov process. For this
reason, one has to resort to numerical experiments to estimate the optimal
dimension of the deflation space.  Figure~\ref{fig:efficiency} shows, again for
all Newton steps in both setups, the wall time of the Krylov iterations as in
figure~\ref{fig:newton-hist} relative to the solution time without deflation.
The experiments show that deflation in the first few Newton steps does not
accelerate the computing speed.  This is due to the fact that the Newton
updates are still significantly large and the subsequent linear systems are too
different from each other in order to take profit from carrying over spectral
information. As the Newton process advances and the updates become smaller, the
subsequent linear systems come closer and deflation of a number of vectors
becomes profitable. Note, however, that there is a point at which the
computational cost of extraction and application of the projection exceeds the
gain in Krylov iterations. For the two-dimensional setup, this value is around
12 while in the three-dimensional case, the minimum roughly stretches from 10
to 20 deflated Ritz vectors.  In both cases, a reduction of effective
computation time by 40\% could be achieved.

\begin{rmk}
    \label{rmk:harmonic_exp}
    Other types of deflation vectors can be considered, e.g., harmonic Ritz
    vectors, see equation~\eqref{eq:harmonic}. In numerical experiments with
    the above test problems we observed that harmonic Ritz vectors resulted in
    a MINRES convergence behavior similar to regular Ritz vectors. \rev{This is
      in accordance with Paige, Parlett, and van der Vorst~\cite{PaiPV95}.}
\end{rmk}

\begin{rmk}
Note that throughout the numerical experiments performed in this paper, the
linear systems were solved up to the relative residual of $10^{-10}$. In
practice, however, one would employ a relaxation scheme as given in,
e.g.,~\cite{eisenstat1996choosing,pernice1998nitsol}.  Those schemes commonly
advocate a relaxed relative tolerance $\eta_k$ in regions of slow convergence,
and a more stringent condition when the speed of convergence accelerates toward
a solution, e.g.,
\[
\eta_k = \gamma \left(\frac{\|F_k\|}{\|F_{k-1}\|}\right)^\alpha
\]
with some $\gamma > 0$, $\alpha>1$.  In the specific case of nonlinear
Schr\"odinger equations, this means that deflation of the near-null vector
$\ii\psi^{(k)}$ (cf.\ figure~\ref{subfig:deflix}) becomes \rev{ineffective} if
$\eta_k$ is \rev{larger} than the stagnation plateau.  The speedup
associated with deflation with a number of Ritz vectors (cf.\
figure~\ref{subfig:defl12}), however, is effective throughout the Krylov
iteration and would hence not be influenced by a premature abortion of the
process.
\end{rmk}

\begin{rmk}\label{rem:loss}
The numerical experiments in this paper were unavoidably affected by round-off
errors. The used MINRES method is based on short recurrences and the
sensitivity to round-off errors may be tremendous. Therefore, a brief
discussion is provided in this remark. A detailed treatment and historical
overview of the effects of finite precision computations on Krylov subspace
methods can be found in the book of Liesen and Strako\v{s}~\cite[sections
5.8--5.10]{LieS13}.
The consequences of round-off errors are manifold and have already been
observed and studied in early works on Krylov subspace methods for linear
algebraic systems, most notably by Lanczos~\cite{Lan52} and Hestenes and
Stiefel~\cite{HesS52}. A breakthrough was the PhD thesis of Paige~\cite{Pai71}
where it was shown that the loss of orthogonality of the Lanczos basis
coincides with the convergence of certain Ritz values.
Convergence may be delayed and the maximal attainable
accuracy, e.g., the smallest attainable residual norm, may be way above machine
precision and above the user-specified tolerance. Both effects heavily depend
on the actual algorithm that is used.
In~\cite{sleijpen2000differences} the impact of certain round-off errors on
the relative residual was analyzed for an unpreconditioned MINRES variant with
the Euclidean inner product. An upper bound on the difference between the
exact arithmetic residual $\vr_n$ and the finite precision residual
$\widehat{\vr}_n$ was given~\cite[formula (26)]{sleijpen2000differences}
\[
  \frac{\nrm[2]{\vr_n - \widehat{\vr}_n}}{\nrm[2]{\vb}}
  \leq \varepsilon \left( 3 \sqrt{3n} \kappa_2(\oiA)^2 + n \sqrt{n}
  \kappa_2(\oiA)\right),
\]
where $\varepsilon$ denotes the \rev{machine epsilon}. The corresponding
bound for GMRES~\cite[formula (17)]{sleijpen2000differences} only involves a
factor of $\kappa_2(\oiA)$ instead of its square. The numerical results
in~\cite{sleijpen2000differences} also indicate that the maximal attainable
accuracy of MINRES is worse than the one of GMRES\@. Thus, if very high accuracy
is required, the GMRES method should be used. An analysis of the stability of
several GMRES algorithms can be found in~\cite{DrkGRS95}.
In order to keep the finite precision Lanczos basis almost orthogonal, a
new Lanczos vector can be reorthogonalized against all previous Lanczos
vectors. The numerical results presented in this paper were computed without
reorthogonalization, i.e., the standard MINRES method. However, all experiments
have also been conducted with reorthogonalization in order to verify that the
observed convergence behavior, e.g., the stagnation phases in
figure~\ref{subfig:without}, are not caused by loss of orthogonality.
\end{rmk}

\section{Conclusions}

For the solution of a sequence of self-adjoint linear systems such as occurring
in Newton process for a large class of nonlinear problems, the authors propose
a MINRES scheme that takes into account spectral information from the previous
linear systems. Central to the approach is the cheap extraction of Ritz vectors
(section~\ref{sec:minres:ritz}) out of a MINRES iteration and the application
of the projection~\eqref{eq:projections}.

As opposed to similar recycling methods previously suggested~\cite{WanSP07},
the projected operator is self-adjoint and is formulated for inner
products other than the $\ell_2$-inner product. This allows for the
incorporation of a wider range of preconditioners than what was previously
possible.
One important restriction that is still remaining is the fact
that for the orthogonalization of the recycling vectors, the inverse of the
preconditioner needs to be known. Unfortunately, this is not the case for some important
classes of preconditioners, e.g., multigrid preconditioners with
a fixed number of cycles. While this
prevents the deflation framework from being universally applicable, the present
work extends the range of treatable problems.

One motivating example for this are nonlinear Schr\"odinger equations (section~\ref{sec:nls}): The occurring linearization is
self-adjoint with respect to a non-Euclidean inner
product~\eqref{eq:real-discr-inner}, and the computation in a three-dimensional
setting is made possible by an AMG preconditioner.  The authors could show that
for the particular case of the Ginzburg--Landau equations, the deflation
strategy reduces the effective run time of a linear solve by up to 40\% (cf.\
figure~\ref{subfig:defl12}).  Moreover, the deflation strategy was shown to
automatically handle the singularity of the problem that otherwise leads to
numerical instabilities.

It is expected that the strategy will perform similarly for other nonlinear
problems.  While adding a number of vectors to the deflation will always lead
to a smaller number of Krylov iterations (and thus less applications of
the operator and the preconditioner), it only comes with extra computational cost in
extracting the Ritz vectors and applying the projection operator;
table~\ref{table:minres-cost} gives a detailed overview of what entities
would need to be balanced.
The optimal
number of deflated Ritz vectors is highly problem-dependent, in particular dependent upon the
computational cost of the preconditioner, and can thus hardly be determined a
priori.

The proposed strategy naturally extends to problems which are not self-adjoint
by choosing, e.g., GMRES as the hosting Krylov method.  For non-self-adjoint
problems, however, the effects of altered spectra on the Krylov convergence is
far more involved than in the self-adjoint case~\cite{murphy1999note}.  This
also makes the choice of Ritz vectors for deflation difficult.  However,
  several heuristics for recycling strategies have been successfully applied to
  non-self-adjoint problems, e.g., by Parks et al.~\cite{ParSMJM06}, Giraud,
Gratton, and Martin~\cite{GirGM07}, Feng, Benner, and Korvink~\cite{FenBK13} as
well as Soodhalter, Szyld, and Xue~\cite{SooSX13}.

\paragraph{Acknowledgments}
The authors wish to thank J\"org~Liesen for his valuable feedback,
Alexander~Schlote for providing experimental results with harmonic Ritz
vectors and the anonymous referees for their helpful remarks.

\bibliographystyle{siam}
\bibliography{andre,nico}

\end{document}